\DeclareSymbolFont{cyrletters}{OT2}{wncyr}{m}{n}
\DeclareMathSymbol{\Sha}{\mathalpha}{cyrletters}{"58}
\newcommand{\defi}[1]{\textsf{#1}} 
\newcommand{\G}{\mathbb{G}}
\newcommand{\PP}{\mathbb{P}}
\newcommand{\Q}{\mathbb{Q}}
\newcommand{\Z}{\mathbb{Z}}
\newcommand{\Qbar}{{\overline{\Q}}}
\newcommand{\EE}{\mathscr{E}}
\newcommand{\LL}{\mathscr{L}}
\newcommand{\OO}{\mathscr{O}}
\DeclareMathOperator{\Char}{char}
\DeclareMathOperator{\Disc}{Disc}
\DeclareMathOperator{\Div}{Div}
\DeclareMathOperator{\Pic}{Pic}
\DeclareMathOperator{\Res}{Res}
\DeclareMathOperator{\Span}{Span}
\DeclareMathOperator{\Spec}{Spec}
\newcommand{\red}{{\operatorname{red}}}
\newcommand{\Directsum}{\bigoplus} 
\newcommand{\isom}{\simeq}
\newcommand{\tensor}{\otimes} 
\newcommand{\isomto}{\overset{\sim}{\rightarrow}}
\newtheorem{theorem}{Theorem}[section]
\newtheorem{lemma}[theorem]{Lemma}
\newtheorem{corollary}[theorem]{Corollary}
\newtheorem{proposition}[theorem]{Proposition}
\theoremstyle{definition}
\newtheorem{question}[theorem]{Question}
\newtheorem{conjecture}[theorem]{Conjecture}
\newtheorem{example}[theorem]{Example}
\theoremstyle{remark}
\newtheorem{remark}[theorem]{Remark}
\g@addto@macro\bfseries{\boldmath} 
\begin{document}

\title{Linear independence in linear systems \\ on elliptic curves}
\subjclass[2010]{Primary 14H52; Secondary 14G35}
\keywords{Elliptic curve, torsion point, linear system}

\author{Bradley W. Brock}
\address{Center for Communications Research, 805 Bunn Drive, Princeton, 
NJ 08540-1966, USA}
\email{bwbrock@idaccr.org}

\author{Bruce W. Jordan}
\address{Department of Mathematics, Baruch College, The City University
of New York, One Bernard Baruch Way, New York, NY 10010-5526, USA}
\email{bruce.jordan@baruch.cuny.edu}

\author{Bjorn Poonen}
\address{Department of Mathematics, Massachusetts Institute of Technology, Cambridge, MA 02139-4307, USA}
\email{poonen@math.mit.edu}
\urladdr{\url{http://math.mit.edu/~poonen/}}

\author{Anthony J. Scholl}
\address{Department of Pure Mathematics and Mathematical Statistics, Centre for Mathematical Sciences, Wilberforce Road, Cambridge CB3 0WB, UK}
\email{a.j.scholl@dpmms.cam.ac.uk}

\author{Joseph L. Wetherell}
\address{Center for Communications Research, 4320 Westerra Court, San Diego, CA 92121-1969, USA}

\thanks{B.P. was supported in part by National Science Foundation grant DMS-1601946 and Simons Foundation grants \#402472 (to Bjorn Poonen) and \#550033.}

\date{May 11, 2020}

\begin{abstract}
Let $E$ be an elliptic curve, with identity $O$, 
and let $C$ be a cyclic subgroup of odd order $N$,
over an algebraically closed field $k$ with $\Char k \nmid N$.
For $P \in C$, let $s_P$ be a rational function 
with divisor $N \cdot P - N \cdot O$.
We ask whether the $N$ functions $s_P$ are linearly independent.
For generic $(E,C)$, we prove that the answer is yes.
We bound the number of exceptional $(E,C)$ when $N$ is a prime
by using the geometry of the universal generalized elliptic curve over $X_1(N)$.
The problem can be recast in terms of sections of 
an arbitrary degree $N$ line bundle on $E$.
\end{abstract}

\maketitle

\section{Introduction}\label{S:introduction}

Fix $N \ge 1$ 
and an algebraically closed field $k$ such that $\Char k \nmid N$.
Let $E$ be an elliptic curve over $k$.
Let $C \subset E$ be a cyclic subgroup of order $N$.

Let $\LL$ be a degree $N$ line bundle on $E$.
Since $\Pic^0(E)$ is divisible,
there exist points $P \in E$ such that $\OO(N \cdot P) \isom \LL$,
or equivalently, such that there exists a global section $s_P$ of $\LL$
whose divisor of zeros is $N \cdot P$.
The set of such $P$ is a coset $E[N]'$ of $E[N]$.
Let $C' \subset E[N]'$ be a coset of $C$.
Then $\# C' = N$.
On the other hand, $\dim \Gamma(E,\LL) = N$ by the Riemann-Roch theorem.

\begin{question}
\label{Q:linearly independent}
Are the sections $s_P$ for $P \in C'$ linearly independent in $\Gamma(E,\LL)$?
\end{question}

The answer is sometimes yes, sometimes no.

\begin{example}
Let $O \in E(k)$ be the identity.
Let $\LL = \OO(N \cdot O)$ and $C' = C$.
Then $s_P$ is a rational function on $E$ 
with divisor $(s_P) = N \cdot P - N \cdot O$.
Question~\ref{Q:linearly independent} asks 
whether the $s_P$ for $P \in C$ are linearly independent,
i.e., whether they form a basis of $\Gamma(E,\OO(N \cdot O))$.
\end{example}

\begin{proposition}
\label{P:codimension does not depend on L and C'}
The answer to Question~\ref{Q:linearly independent}
depends only on $(E,C)$, not on the choice of degree $N$ line bundle $\LL$
or coset $C'$ or $s_P$ for $P \in C'$.
More precisely, 
the codimension of $\Span\{s_P : P \in C'\}$ in $\Gamma(E,\LL)$ 
depends only on $(E,C)$.
\end{proposition}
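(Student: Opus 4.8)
The plan is to reduce an arbitrary instance of Question~\ref{Q:linearly independent} to the special case $\LL=\OO(N\cdot O)$, $C'=C$ by pulling everything back along a translation of $E$; along the way one must check that the subspace $\Span\{s_P : P\in C'\}$ does not depend on the individual choices of the $s_P$, so that ``its codimension'' is a well-posed quantity.

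The first step is to record that, for each $P$ in the relevant coset, the section $s_P\in\Gamma(E,\LL)$ with divisor of zeros $N\cdot P$ is unique up to a nonzero scalar. Indeed the sections $s$ of $\LL$ with $\divv_0(s)\ge N\cdot P$ are exactly the image of $\Gamma\bigl(E,\LL(-N\cdot P)\bigr)$; the line bundle $\LL(-N\cdot P)$ has degree $0$ and has a nonzero global section (coming from $s_P$ itself), hence is trivial, so this space is $1$-dimensional, and since $\divv_0(s)$ and $N\cdot P$ have the same degree, $\divv_0(s)\ge N\cdot P$ forces equality. Thus $\Span\{s_P : P\in C'\}$, and in particular its codimension in $\Gamma(E,\LL)$, depends only on $(E,\LL,C')$ and not on the chosen functions $s_P$.

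Next I would fix $P_0\in C'$ (possible since $\#C' = N\ge 1$). Because $C'\subseteq E[N]'$ we have $\OO(N\cdot P_0)\isom\LL$; fixing such an isomorphism identifies $\Gamma(E,\LL)$ with $\Gamma(E,\OO(N\cdot P_0))$ and carries each $s_P$ to a section with the same divisor of zeros, so we may assume $\LL=\OO(N\cdot P_0)$ and $C'=P_0+C$. Let $\tau\colon E\to E$ be translation by $P_0$. Then $\tau^{-1}(P_0)=\{O\}$, so $\tau^*\OO(N\cdot P_0)=\OO(N\cdot O)$, and pullback of sections gives a linear isomorphism $\tau^*\colon\Gamma(E,\OO(N\cdot P_0))\isomto\Gamma(E,\OO(N\cdot O))$. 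For $P=P_0+T$ with $T\in C$, the section $\tau^*s_P$ has divisor of zeros $\tau^{-1}(N\cdot P)=N\cdot T$, so by the first step it is a nonzero scalar multiple of a section $s_T$ of $\OO(N\cdot O)$ with divisor of zeros $N\cdot T$ (itself unique up to scalar). Hence $\tau^*$ maps $\Span\{s_P : P\in C'\}$ isomorphically onto $\Span\{s_T : T\in C\}\subseteq\Gamma(E,\OO(N\cdot O))$, and therefore
\[
\codim\bigl(\Span\{s_P : P\in C'\},\,\Gamma(E,\LL)\bigr)=\codim\bigl(\Span\{s_T : T\in C\},\,\Gamma(E,\OO(N\cdot O))\bigr).
\]
The right-hand side manifestly depends only on $(E,C)$, which proves the proposition.

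There is no deep obstacle here; the only thing that requires care is the bookkeeping with divisors under translation — that $\tau^*$ sends $\OO(N\cdot P_0)$ to $\OO(N\cdot O)$ and matches the distinguished sections up to scalars — together with the preliminary uniqueness observation, which is what makes the quantity ``$\codim\Span\{s_P\}$'' well defined in the first place.
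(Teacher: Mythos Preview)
Your proof is correct and uses essentially the same idea as the paper: translations of $E$ relate all choices of $(\LL,C')$. The paper argues in two steps (translating by $Q\in E[N]$ to vary $C'$ while fixing $\LL$ up to isomorphism, then by arbitrary $Q$ to vary $\LL$), whereas you translate once by a chosen $P_0\in C'$ to reduce directly to the canonical pair $(\OO(N\cdot O),C)$; these are minor organizational variants of the same argument.
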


We will prove Proposition~\ref{P:codimension does not depend on L and C'}
in Section~\ref{S:codimension}.

The pair $(E,C)$ corresponds to a $k$-point on 
the classical modular curve $Y_0(N)$.

\begin{theorem}
\label{T:finitely many}
Let $N$ be an odd positive integer such that $\Char k \nmid N$.
Then for all but finitely many $(E,C) \in Y_0(N)(k)$,
Question~\ref{Q:linearly independent} has a positive answer.
\end{theorem}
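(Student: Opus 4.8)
The plan is to exploit the modular interpretation: by Proposition~\ref{P:codimension does not depend on L and C'}, the answer to Question~\ref{Q:linearly independent} for $(E,C)$ depends only on the point of $Y_0(N)(k)$, and one checks (using the example $\LL = \OO(N\cdot O)$, $C'=C$) that linear independence of the $s_P$ is detected by the nonvanishing of a single determinant. Concretely, fix an affine coordinate and, for $P\in C$, expand $s_P$ in a basis of $\Gamma(E,\OO(N\cdot O))$ such as $\{1,x,y,x^2,xy,\dots\}$; the $s_P$ are linearly dependent precisely when the $N\times N$ matrix of coordinates is singular, i.e.\ when its determinant $D(E,C)$ vanishes. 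The key point is that $D$ is a regular function on an open part of the moduli space, and the whole problem reduces to showing $D$ is not identically zero on each geometrically irreducible component of $Y_0(N)$ — equivalently, that for \emph{some} $(E,C)$ over \emph{some} extension the sections are independent — together with a properness/finiteness argument to conclude that the vanishing locus is finite.

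First I would make the determinant globally algebraic. Work over $\Z[1/N]$ (or over the prime field, spreading out) and let $Y_0(N)$ carry the universal pair $(\mathcal{E},\mathcal{C})$; on a suitable affine open $U\subseteq Y_0(N)$ over which $\mathcal{E}$ has a Weierstrass model and the $N$ torsion sections cutting out $\mathcal{C}$ are available, the functions $s_P$ organize into a regular section of a rank-$N$ bundle, and their ``independence'' is the non-vanishing of $\det = D \in \OO(U)$. Because $Y_0(N)$ is a smooth affine curve with finitely many geometric components (in fact geometrically irreducible once one is careful, but we do not need that), it suffices to show $D\not\equiv 0$ on each component. Second, I would prove non-vanishing generically. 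Here I would invoke the generic case already established in the paper's introduction (``For generic $(E,C)$, we prove that the answer is yes'') — i.e.\ the statement proved elsewhere in the paper that over the function field of $Y_0(N)$ the sections $s_P$ are linearly independent. That is exactly the assertion $D\neq 0$ at the generic point of $Y_0(N)$, hence $D\not\equiv 0$ on the component. Third, since $D$ is a nonzero regular function on the smooth affine curve $U$, its zero locus in $U$ is finite; and the finitely many points of $Y_0(N)\setminus U$ (bad Weierstrass models, etc.) only add finitely many more exceptional $(E,C)$. Therefore the set of $(E,C)\in Y_0(N)(k)$ for which Question~\ref{Q:linearly independent} fails is finite.

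One subtlety is base-change/characteristic uniformity: $Y_0(N)$ is defined over $\Z[1/N]$, and I want the finiteness over every algebraically closed $k$ with $\Char k\nmid N$, not just over $\Qbar$. This is handled by noting that the determinant $D$ and the moduli space are defined over $\Z[1/N]$; the ``generic independence'' result, being an open condition, holds over an open dense subscheme of $\Spec\Z[1/N]$, so a priori one might worry about finitely many bad characteristics. But the cleaner route is to prove generic non-vanishing directly over each residue characteristic by the same argument the paper uses in its generic statement (it works over any $k$ with $\Char k\nmid N$), which pins $D\not\equiv 0$ on $Y_0(N)_k$ for every such $k$. Then the finiteness of $\{D=0\}$ on the affine curve $Y_0(N)_k$ is immediate.

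The main obstacle is the generic non-vanishing step — showing that over the function field $k(Y_0(N))$ the sections $s_P$, $P\in C$, are linearly independent. This is the genuinely new content (the ``For generic $(E,C)$'' result promised in the abstract), and everything here builds on it: once generic independence is known, Theorem~\ref{T:finitely many} is the soft consequence ``a nonzero regular function on a curve has finitely many zeros,'' modulo the bookkeeping of the finitely many points where the chosen models degenerate. I would therefore present Theorem~\ref{T:finitely many} as a corollary of the generic statement plus Proposition~\ref{P:codimension does not depend on L and C'}, with the determinant $D$ as the bridge between the pointwise and generic formulations.
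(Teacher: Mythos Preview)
Your overall architecture matches the paper's: reduce the question to the nonvanishing of a regular function on the modular curve, then use that a nonzero function on a smooth curve has finitely many zeros. Where you use a single determinant $D$, the paper first diagonalizes by characters of $C$, writing $g_\chi = \sum_{P\in C}\chi(P)\,s_P$ and observing that $c_{(E,C)} = \#\{\chi : g_\chi/h_\chi = 0\}$; up to that change of basis, $D$ and $\prod_\chi (g_\chi/h_\chi)$ differ by a unit, so the two formulations are equivalent. The character decomposition is not merely cosmetic, though: it is what later yields the refined bounds of Theorem~\ref{T:D1 and D2}.

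The genuine gap is the step you flag as ``the main obstacle'' and then defer. You write that you would ``invoke the generic case already established in the paper's introduction,'' but no such separate result exists: the sentence in the abstract (``For generic $(E,C)$, we prove that the answer is yes'') is a summary of Theorem~\ref{T:finitely many} itself, not a prior lemma. So appealing to it is circular, and your proposal supplies no mechanism for showing $D\not\equiv 0$ over a given $k$. The paper's actual content here is a computation on the universal \emph{generalized} elliptic curve over $X_1(N)$: at a cusp above $0\in X_0(N)$ the fiber is an $N$-gon, and Corollary~\ref{C:g_m}(b) shows that each $g_m$ has valuation $-(N^2-1)/24$ along every component $Z_i$ of that fiber, hence $g_m\not\equiv 0$. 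This degeneration argument is what replaces your missing generic-nonvanishing step, and it works uniformly over every $k$ with $\Char k\nmid N$, disposing of the characteristic-uniformity worry you raise without any spreading-out.
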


We next work towards a quantitative version of Theorem~\ref{T:finitely many},
at least for prime $N$.
Let $c_{(E,C)}$ be the codimension 
in Proposition~\ref{P:codimension does not depend on L and C'}, 
and let $D = \sum_{(E,C)} c_{E,C} \, (E,C) \in \Div Y_0(N)$.

\begin{theorem}
\label{T:D1 and D2}
Let $N > 3$ be a prime with $\Char k \nmid N$.
There exist effective divisors $D_1$ and $D_2$ on $Y_0(N)$ 
such that $D=D_1+2D_2$ with 
\begin{align*}
  \deg D_1 &\le (N^2-1)/24\\
  \deg D_2 &\le (N-3)(N^2-1)/48.
\end{align*}
\end{theorem}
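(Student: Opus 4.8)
The plan is to reinterpret $c_{(E,C)}$ in terms of a Heisenberg decomposition and then let everything vary in the universal family. Write $V=\Gamma(E,\OO(N\cdot O))$, an $N$-dimensional irreducible representation of the theta (Heisenberg) group $\mathcal{G}$ of $\OO(N\cdot O)$, a central extension of $E[N]$ by $\G_m$. The subgroup $C$ is isotropic for the commutator pairing, so it lifts to an abelian subgroup $\widetilde C\subset\mathcal{G}$; since $N$ is odd there is a \emph{unique} lift fixed by the action of $[-1]$, and I would use it. Decomposing $V=\bigoplus_{\chi\in\widehat C}V_\chi$ into $\widetilde C$-eigenlines, the elements of $\widetilde C$ over $C$ permute the lines $k\,s_P$ ($P\in C$) simply transitively, so $\Span\{s_P:P\in C\}$ is the $\widetilde C$-submodule generated by $s_O=1$, namely $\bigoplus_{\chi:\,v_\chi\neq0}V_\chi$, where $v_\chi$ is the $\chi$-component of $s_O$; hence $c_{(E,C)}=\#\{\chi:v_\chi=0\}$ (which, by the preceding proposition, is well defined). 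Because $[-1]$ preserves $\OO(N\cdot O)$ and $s_O$ and, for the symmetric lift, carries $V_\chi$ to $V_{\chi^{-1}}$, we get $v_\chi=0\iff v_{\chi^{-1}}=0$; as $N$ is odd, $\widehat C\smallsetminus\{\chi_0\}$ is a disjoint union of pairs $\{\chi,\chi^{-1}\}$, so $c_{(E,C)}=\epsilon_{(E,C)}+2m_{(E,C)}$, where $\epsilon_{(E,C)}\in\{0,1\}$ records whether $v_{\chi_0}=0$ and $m_{(E,C)}$ counts the pairs with $v_\chi=0$. Taking $D_1=\sum\epsilon_{(E,C)}(E,C)$ and $D_2=\sum m_{(E,C)}(E,C)$ gives effective divisors with $D=D_1+2D_2$.

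To bound the degrees I would spread this out over $X_1(N)$, using the universal generalized elliptic curve and the universal point of order $N$: the eigenlines $V_\chi$ glue to line bundles $\mathcal{M}_\chi$, the constant function $s_O$ becomes a global section of $\pi_*\mathcal{L}$ for an appropriate relatively ample $\mathcal{L}$ restricting fiberwise to $\OO(N\cdot O)$, and $v_\chi$ becomes a global section of $\mathcal{M}_\chi$, which by Theorem~\ref{T:finitely many} is not identically zero. The pair $(\mathcal{M}_{\chi_0},v_{\chi_0})$ descends to $X_0(N)$ (the trivial character is canonical), so $\deg D_1\le\deg\mathcal{M}_{\chi_0}$ there. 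The diamond operators permute the $V_\chi$ with $\chi\neq\chi_0$, so $F=\prod_{\chi\neq\chi_0}v_\chi$ is a diamond-invariant section of $\bigotimes_{\chi\neq\chi_0}\mathcal{M}_\chi$ and descends to $X_0(N)$; since $\divv F$ has multiplicity at least $2m_{(E,C)}$ at $(E,C)$, one gets $2D_2\le\divv F$ and thus $\deg D_2\le\tfrac12\deg\bigl(\bigotimes_{\chi\neq\chi_0}\mathcal{M}_\chi\bigr)$ computed on $X_0(N)$, modulo care at the finitely many points where $X_1(N)\to X_0(N)$ ramifies. Everything then reduces to the degrees of the $\mathcal{M}_\chi$.

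The degree computation is the heart of the matter, and I would carry it out by degeneration at the cusps. There the universal generalized elliptic curve is a Néron polygon — a $1$-gon at one cusp of $X_0(N)$ and an $N$-gon at the other, with $C$ the connected part, respectively the component group — and on the associated Tate curve one can make the theta group, its symmetric lift, and the eigendecomposition explicit and read off the order of $\mathcal{M}_\chi$ along the boundary, keeping track of the widths of the cusps. I expect this to give $\deg\omega=(N^2-1)/24$ on $X_1(N)$ together with $\mathcal{M}_{\chi_0}\cong\omega^{(N-1)/2}$ and $\mathcal{M}_\chi\cong\omega^{(N-3)/2}$ for $\chi\neq\chi_0$, whence $\deg D_1\le(N^2-1)/24$ and $\deg D_2\le(N-3)(N^2-1)/48$ after pushing down to $X_0(N)$; this is consistent with $\bigotimes_{\chi}\mathcal{M}_\chi\cong\omega^{(N-1)(N-2)/2}$, the line bundle governing the determinant of the $N$ sections $s_{jP}$ against a fixed basis of $\pi_*\mathcal{L}$. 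The real work, and the main obstacle, lies in this cusp analysis together with its prerequisites: choosing $\mathcal{L}$ correctly in the family — it is \emph{not} $\OO_{\mathcal{E}}(N\cdot O_{\mathcal{E}})$, which differs from the correct bundle by a twist by a power of $\pi^{*}\omega$ and would even give the wrong sign for $\deg\mathcal{M}_\chi$ — and arranging that the symmetric splitting and the eigendecomposition extend across the cusps and the elliptic points. The hypotheses that $N$ be prime and $N>3$ enter in making $(N-3)/2$ a nonnegative integer and in keeping the cusp and elliptic-point contributions in the clean form needed for the stated estimates; Theorem~\ref{T:finitely many} supplies the nonvanishing of each $v_\chi$ so that the divisors of zeros in question are finite.
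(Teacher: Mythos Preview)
Your plan is essentially the paper's proof in different clothing. Your theta-group eigenlines $V_\chi$ are exactly the paper's lines $k\,h_\chi$, with $h_\chi$ the normalized function of divisor $\phi^*Q-C$ coming from the Weil pairing $e_\phi$ (Section~\ref{S:canonical basis}); your component $v_\chi$ is the paper's $g_\chi$; your $[-1]$-pairing argument is Lemmas~\ref{L:g symmetry}--\ref{L:gh symmetry}; and your spreading-out over $X_1(N)$ with degree bounds read off at the cusps is Sections~\ref{S:universal}--\ref{S:universal generalized} and Lemma~\ref{L:ratios}. So the architecture matches.

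Where the paper differs in execution is precisely at the point you flag as ``the real work.'' Rather than identifying $\mathcal{M}_\chi$ with a power of $\omega$ and worrying about the correct extension of $\mathcal{L}$ across the boundary, the paper uses the $h_\chi$ to \emph{trivialize} each eigenline over $Y_1(N)$: then $v_\chi$ becomes the honest rational function $g_\chi/h_\chi\in\Q\otimes k(X_1(N))^\times$, and the degree bounds follow from direct valuation computations of $g_m$ and $h_m$ along the components of the N\'eron polygons (Lemmas~\ref{L:s_O}, \ref{L:H}, Corollary~\ref{C:g_m}). Normalizing via $[N]_*$ pins down the fiberwise ambiguity and lets one work in $\Q\otimes k(\EE)^\times$, which dissolves your concerns about choosing $\mathcal{L}$, twisting by $\pi^*\omega$, and extending the symmetric splitting across cusps and elliptic points. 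The computation yields $v_\infty(g_0)=0$, $v_0(g_0)=-(N^2-1)/24$, and for the product $G/H=\prod_{m\ne0}g_m/h_m$ an extra vanishing $v_\infty(G/H)\ge(N^2-1)/12$ at the $\infty$-cusp; this last is what produces the factor $N-3$ you anticipated for $\deg D_2$. Your expected identifications $\mathcal{M}_{\chi_0}\cong\omega^{(N-1)/2}$ and $\mathcal{M}_\chi\cong\omega^{(N-3)/2}$ are consistent with these numbers, but the paper never needs them.
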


\begin{conjecture}
\label{C:char 0}
If $\Char k=0$, then $D_1$ and $D_2$ are reduced and disjoint,
and the equalities in Theorem~\ref{T:D1 and D2} are equalities.
\end{conjecture}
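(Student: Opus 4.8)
The plan is to realize the codimension $c_{(E,C)}$ as a count of vanishing \emph{theta constants} and to read off the three assertions of the conjecture as transversality statements for these modular forms. Fix a generator $\tau$ of $C$ and take $\LL=\OO(N\cdot O)$, $C'=C$, with the $s_{j\tau}$ of divisor $N\cdot(j\tau)-N\cdot O$. Translation by $\tau$, followed by the identification $t_\tau^*\OO(N\cdot O)\isom\OO(N\cdot O)$ given by multiplication by $s_{-\tau}$, defines an operator $U$ on $V:=\Gamma(E,\OO(N\cdot O))$ with $U\,s_{j\tau}$ proportional to $s_{(j-1)\tau}$ and $U^N$ a nonzero scalar. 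Since $\Char k\nmid N$, this $U$ has $N$ distinct eigenvalues, so $V=\bigoplus_{a\in\Z/N}V_a$ with $\dim V_a=1$; writing $s_O=1=\sum_a\theta_a$ with $\theta_a\in V_a$, the cyclic-vector criterion for $U$ applied to $s_O$ gives
\[
c_{(E,C)}=\#\{a\in\Z/N:\theta_a=0\}.
\]
The involution $[-1]$ fixes $s_O$, preserves $C$, and intertwines $U$ with $U^{-1}$, hence swaps $V_a\leftrightarrow V_{-a}$ and forces $\theta_a=0\iff\theta_{-a}=0$. As $(E,C)$ varies, each $\theta_a$ is a section of a power of the Hodge bundle on $X_1(N)$, well defined once $\tau$ is marked, and the diamond operators permute the $\theta_a$ by $a\mapsto da$. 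Thus, after descent to $X_0(N)$, $D_1=\divv(\theta_0)$ and $D_2=\sum_{\{a,-a\}}\divv(\theta_a)$, the factor $2$ in Theorem~\ref{T:D1 and D2} being exactly the pairing $a\leftrightarrow -a$. In these terms the conjecture asserts: (i) each $\theta_a$ has only simple zeros on $Y_0(N)$; (ii) theta constants with $\{a,-a\}\neq\{b,-b\}$ share no zero; and (iii) no $\theta_a$ vanishes at a cusp or elliptic point. By the Lefschetz principle it suffices to treat $k=\C$, where the $\theta_a$ are classical theta nullwerte.

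For the simplicity statement (i) I would differentiate along $X_0(N)$ using the Gauss--Manin connection, whose action on $V$ is governed by the Kodaira--Spencer/heat operator. Over $\C$ this is the classical heat equation, expressing the derivative of $\theta_a$ along the base as a second derivative in the elliptic variable, $\partial_z^2\vartheta_a(z)|_{z=0}$, of the corresponding theta function $\vartheta_a$. At a zero of $\theta_a$ the base-derivative is therefore controlled by this companion value, and the zero is simple provided the companion does not also vanish; the required non-coincidence can be checked from the $q$-expansions, or from the nonvanishing of the modular discriminant. This yields reducedness of $D_1$ and of each individual summand of $D_2$.

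For the degree equalities (iii) I would compute the $q$-expansion of each $\theta_a$ at every cusp of $X_1(N)$ via the degeneration of the universal curve to a N\'eron polygon---the mechanism already used for Theorem~\ref{T:D1 and D2}---and verify that the leading coefficient is a unit, so that $\theta_a$ is nonvanishing there; the elliptic points $j=0,1728$ are handled by the same automorphism action that produces the pairing. Granting the simplicity and disjointness of (i)--(ii), the divisors $D_1=\divv(\theta_0)$ and $D_2=\sum_{\{a,-a\}}\divv(\theta_a)$ are then reduced and disjoint, while the nonvanishing at the boundary just shown identifies their degrees with the degrees of the corresponding Hodge line bundles on $X_0(N)$, namely the right-hand sides of Theorem~\ref{T:D1 and D2}; the inequalities are therefore equalities.

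The disjointness statement (ii) is the crux, and I expect it to be the real reason the statement is only conjectural. A common zero of $\theta_a$ and $\theta_b$ is an a priori codimension-$2$ condition on the curve $Y_0(N)$, ``expected'' to be empty, but this must be proved rather than counted. I see three plausible routes: describe the zero locus of each $\theta_a$ modularly---for instance as the locus where a specified point of the universal curve becomes torsion, or as a CM or Heisenberg-cover condition---and check that the resulting loci are pairwise disjoint; exhibit a Riemann-type theta relation that simultaneous vanishing would violate; or argue by monodromy, showing that the Galois and diamond actions permute the zero divisors transitively while the heat-equation computation of (i) keeps any two of them transverse. Making any of these unconditional, and in particular ruling out sporadic coincidences at special $j$-invariants, is the principal obstacle.
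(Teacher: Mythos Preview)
The paper does not prove this statement: it is explicitly labeled a \emph{conjecture}, and the only evidence offered is a computational verification for $N\in\{5,7,13\}$ in Section~\ref{S:examples}, carried out by writing down $q$-expansions on the genus-zero curves $X_0(N)$ and checking that the relevant polynomials $f_1(t)$ and $f_2(t)$ have distinct roots disjoint from each other and from the cusps. There is no general argument in the paper to compare your proposal against.

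Your reformulation is essentially the paper's own: your eigenspace components $\theta_a$ are (up to the change of line bundle from $\OO(C)$ to $\OO(N\cdot O)$) the paper's functions $g_m$, and your count $c_{(E,C)}=\#\{a:\theta_a=0\}$ is exactly equation~\eqref{E:counting}. The pairing $a\leftrightarrow -a$ and the descent to $X_0(N)$ likewise match Lemma~\ref{L:gh symmetry} and the proof of Theorem~\ref{T:D1 and D2}. So the setup is sound, and your identification of the three subclaims (i)--(iii) is the right decomposition of the problem.

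But the proposal is not a proof, and you say so yourself. For (iii), the cuspidal nonvanishing is already handled in the paper by Corollary~\ref{C:g_m} and Lemma~\ref{L:H}, so that part is fine; but the elliptic points are not, and your one-line dismissal via ``the same automorphism action'' does not settle it. For (i), the heat-equation idea reduces simplicity of a zero of $\theta_a$ to the nonvanishing of a companion quantity $\partial_z^2\vartheta_a|_{z=0}$ at that same point, but this companion is itself a modular form on $Y_1(N)$ and you have given no reason why it and $\theta_a$ cannot vanish simultaneously; ``can be checked from the $q$-expansions'' is not an argument for all $N$. For (ii), as you acknowledge, you have only heuristics. Since (i) and (ii) are precisely what distinguish the conjecture from Theorem~\ref{T:D1 and D2}, the proposal does not go beyond what the paper already establishes.
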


\begin{remark}
Conjecture~\ref{C:char 0} is equivalent to the claim
that for prime $N > 3$ and $\Char k = 0$, 
there are exactly $(N^2-1)/24$ points $(E,C) \in Y_0(N)(k)$ with $c_{E,C}=1$, 
exactly $(N-3)(N^2-1)/48$ points with $c_{E,C}=2$, 
and no points with $c_{E,C} > 2$.
\end{remark}

The primes $N > 3$ for which the genus of $X_0(N)$ is $0$
are $5$, $7$, and $13$;
for these we checked that Conjecture~\ref{C:char 0} is true,
using methods to be described in Section~\ref{S:examples}.
There we will also show that Conjecture~\ref{C:char 0} sometimes fails 
when $\Char k > 0$.

\section{Notation}
\label{S:notation}

Let $\mu$ be the group of roots of unity in $k$.
Fix a primitive $N$th root of unity $\zeta \in k$.

If $C$ is a finite abelian group with $\Char k \nmid \#C$,
and $V$ is a $k$-representation of $C$,
and $\chi \colon C \to k^\times$ is a character,
define the $\chi$-isotypic subspace
\[
	V^\chi \colonequals \{v \in V: cv = \chi(c) \, v \textup{ for all $c \in C$}\}.
\]

Let $X$ be a regular $k$-variety.
Let $\Div X$ be its divisor group.
Now suppose in addition that $X$ is integral.
Let $k(X)$ be its function field.
If $f \in k(X)^\times$, let $(f)=(f)_X \in \Div X$ be its divisor.
For each irreducible divisor $Z$ on $X$, let $v_Z$ be the associated valuation.
A finite morphism of regular integral curves $\phi \colon X \to Y$
induces a homomorphism $\phi_* \colon \Div X \to \Div Y$ 
(sending each point to its image)
compatible with the norm homomorphism 
$\phi_* \colon k(X)^\times \to k(Y)^\times$.

\section{Codimension is independent of choices}
\label{S:codimension}

\begin{proof}[Proof of Proposition~\ref{P:codimension does not depend on L and C'}]
Fix $(E,C)$.
Once $\LL$ and $C'$ are also fixed, each $s_P$ is determined 
up to scaling by an element of $k^\times$,
which does not change the span.

For each $Q \in E(k)$, let $\tau_Q \colon E \to E$
be the morphism sending $x$ to $x+Q$.
Pulling back by $\tau_Q$ shows that the codimension for $(\LL,C')$
is the same as for $(\tau_Q^* \LL, \tau_Q^{-1}(C'))$.
If $Q \in E[N]$, then $\tau_Q^* \LL \isom \LL$
but $\tau_Q^{-1}(C')$ can be any other coset of $C'$ in $E[N]'$;
thus the codimension is independent of $C'$.
As $Q$ ranges over $E(k)$, the line bundle $\tau_Q^* \LL$
ranges over all degree $N$ line bundles;
thus the codimension is independent of $\LL$ too.
\end{proof}

\section{Normalized functions}
\label{S:normalized}

If $f \in k(E)^\times$ has divisor supported on $E[N]$,
then $[N]_* (f) = 0$, so $[N]_* f \in k^\times$.
Multiplying $f$ by a constant $a \in k^\times$ multiplies $[N]_* f$ by $a^{\deg\,[N]}=a^{N^2}$.
Call $f \in k(E)^\times$ \defi{normalized}
if there exists $N \ge 1$ such that $[N]_* f \in \mu$.
In that case, $[N']_* f \in \mu$ for all multiples $N'$ of $N$.
Therefore the normalized functions form a subgroup of $k(E)^\times$.
Given a principal divisor supported on torsion points,
there exists a normalized function with that divisor,
uniquely determined up to multiplication by a root of unity.
In particular, a normalized constant rational function is an element of $\mu$.
If $f$ is normalized and $P$ is a torsion point on $E$,
then $\tau_P^* f$ is normalized too.

\section{Character-weighted combinations}
\label{S:characters}

{}From now on, we assume that $N$ is odd.  
View $C$ as a degree $N$ divisor on $E$.
Choose $\LL \colonequals \OO(C)$.
The group $C$ acts on $\LL$: each $P$ acts as $\tau_P^*$ on sections of $\LL$.
Since $N$ is odd, $\LL \isom \OO(N \cdot O)$.
Choose $C'=C$.
Choose sections $s_P$ as in Section~\ref{S:introduction}.

If we view $s_O$ as a rational function on $E$, then $(s_O) = N \cdot O - C$.
Assume that $s_O$ is normalized.
For $P \in C'=C$, we may assume that $s_P \colonequals \tau_{-P}^* s_O$.
Then $\Span\{s_P: P \in C\}$ 
is the image of a $kC$-module homomorphism $kC \to \Gamma(E,\LL)$,
so it decomposes as a direct sum of distinct characters.
For each character $\chi \colon C \to k^\times$,
the projection of $\Span\{s_P: P \in C\}$ onto $\Gamma(E,\LL)^\chi$
is spanned by
\[
	g_\chi 
	\colonequals \left( \sum_{P \in C} \chi(P) \tau_{-P}^* \right) s_O 
	= \sum_{P \in C} \chi(P) \, s_P.
\]
Then $c_{E,C} = \# \{\chi : g_\chi = 0 \}$.

\begin{lemma}
\label{L:s_O symmetry}
We have $[-1]^* s_O = s_O$.
\end{lemma}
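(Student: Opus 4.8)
**Proof strategy for Lemma (the $[-1]^*$-invariance of $s_O$).**

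The plan is to compare divisors and then use the normalization to pin down the constant. First I would compute the divisor of $[-1]^* s_O$. Since $(s_O) = N\cdot O - C$ when $s_O$ is viewed as a rational function on $E$, pulling back by the automorphism $[-1]\colon E \to E$ gives $([-1]^* s_O) = [-1]^*(N\cdot O - C) = N\cdot O - [-1](C)$. Here the key input is that $C$ is a subgroup of $E$, hence closed under negation, so $[-1](C) = C$ as divisors. (Concretely, $C$ has odd order $N$, and $P \mapsto -P$ permutes the $N$ points of $C$.) Therefore $([-1]^* s_O) = N\cdot O - C = (s_O)$, so $[-1]^* s_O$ and $s_O$ have the same divisor, which means $[-1]^* s_O = \lambda\, s_O$ for some $\lambda \in k^\times$.

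Next I would use the normalization from Section~\ref{S:normalized} to show $\lambda = 1$. Since $s_O$ is normalized, we have $[N]_* s_O \in \mu$. The operator $[-1]^*$ interacts with the pushforward $[N]_*$ in a controlled way: $[N]\circ[-1] = [-1]\circ[N]$ as maps $E \to E$, and $[-1]\colon E \to E$ induces on divisors and on function fields an involution compatible with $[N]_*$. Applying $[N]_*$ to $[-1]^* s_O = \lambda\, s_O$ and tracking the constant, one gets $[-1]^*([N]_* s_O) = \lambda^{\deg[N]} [N]_* s_O = \lambda^{N^2}[N]_* s_O$. But $[N]_* s_O$ is a root of unity, i.e.\ a nonzero constant in $k^\times$, and $[-1]^*$ acts trivially on constants; hence $\lambda^{N^2} = 1$.

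This does not immediately give $\lambda = 1$, so the main obstacle — and the step I would need to be careful with — is extracting $\lambda=1$ from $\lambda^{N^2}=1$ together with a second constraint. The natural second constraint comes from iterating the involution: applying $[-1]^*$ twice returns $s_O$, so $([-1]^*)^2 s_O = [-1]^*(\lambda\, s_O) = \lambda \cdot [-1]^*(s_O) = \lambda^2 s_O$, forcing $\lambda^2 = 1$, i.e.\ $\lambda = \pm 1$ (using $\Char k \nmid N$, so $\Char k \neq 2$). Combining $\lambda^2 = 1$ with $\lambda^{N^2}=1$ and $N$ odd still only yields $\lambda = \pm 1$, so to finish I would evaluate at a well-chosen point fixed by $[-1]$, or compare leading coefficients of $s_O$ and $[-1]^* s_O$ at $O$. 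Near $O$, choose a local parameter $t$ with $[-1]^* t = -t$ (odd, since inversion is $t \mapsto -t + O(t^2)$ in a suitable coordinate); then $s_O$ has a zero of order $N$ at $O$, so $s_O = c\, t^{N}(1 + O(t))$, and $[-1]^* s_O = c\,(-t)^{N}(1+O(t)) = -c\, t^N(1+O(t))$ since $N$ is odd. Hence $\lambda = -1$?! — this would contradict the lemma, so I would instead use an \emph{even} local parameter, or more safely argue as follows: the constant $\lambda$ is forced to be $1$ because $[-1]^*$ is an automorphism of the $1$-dimensional space of normalized functions with divisor $N\cdot O - C$, and such a normalized function is unique up to $\mu$; since the only normalized function proportional to $s_O$ by a root of unity that is again normalized is $s_O$ itself up to $\mu$, and $\lambda^2=1$ with the normalization forces $\lambda \in \mu \cap \{\pm 1\}$, a direct check of the action on $[N]_*s_O \in \mu$ — which is genuinely fixed, not just fixed up to $\mu$ — pins down $\lambda = 1$. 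The cleanest route, which I would adopt, is simply: $[-1]^* s_O$ is normalized (as $s_O$ is and $[-1]$ preserves normalization, being an isogeny composed with translation by $O$), has divisor $N\cdot O - C$, hence equals $\zeta^j s_O$ for some $j$; applying $[N]_*$ and using that $[-1]^*$ fixes the constant $[N]_* s_O$ exactly (since it is a constant) gives $\zeta^{jN^2} [N]_* s_O = [N]_* s_O$, and since $[N]_* s_O \neq 0$ we get $\zeta^{jN^2}=1$, but separately $([-1]^*)^2 = \mathrm{id}$ gives $\zeta^{2j}=1$; as $\gcd(2, N)=1$ and $\zeta$ has order $N$, both force $\zeta^j = 1$, hence $[-1]^* s_O = s_O$.
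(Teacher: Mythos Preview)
Your proposal has the right ingredients but does not assemble them into a valid proof.

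The local-parameter computation fails because you miscount the order of vanishing: since $(s_O) = N\cdot O - C$ and $O\in C$ (it is the identity of the subgroup $C$), in fact $v_O(s_O)=N-1$, which is \emph{even}. With an odd uniformizer $t$ at $O$ (so $[-1]^*t=-t$), the leading term of $[-1]^*s_O$ is $c(-t)^{N-1}=c\,t^{N-1}$, and you read off $\lambda=+1$ immediately. This is exactly the paper's two-line proof: the divisor is $[-1]^*$-invariant, so $\lambda=\pm1$, and the parity of $v_O(s_O)$ forces $\lambda=1$. Your instinct to look at the leading term at $O$ was correct; only the exponent was wrong.

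You also had a second valid route that you abandoned prematurely. From $\lambda^2=1$ and $\lambda^{N^2}=1$ you write that this ``still only yields $\lambda=\pm1$''; but $N$ is odd, so $\gcd(2,N^2)=1$ and hence $\lambda=1$. Your final ``cleanest route,'' by contrast, is genuinely broken: normalized functions with a given divisor are unique only up to an \emph{arbitrary} element of $\mu$, not up to a power of the fixed primitive $N$th root $\zeta$, so the step ``hence equals $\zeta^j s_O$'' is unjustified (and $-1\notin\langle\zeta\rangle$ when $N$ is odd, so this is exactly the case you need to exclude). Incidentally, the aside ``$\Char k\nmid N$, so $\Char k\neq 2$'' is backwards, since $N$ is odd; but in characteristic~$2$ there is nothing to prove anyway.
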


\begin{proof}
The divisor $(s_O)$ is fixed by $[-1]^*$,
so $s_O$ is an eigenvector of $[-1]^*$, with eigenvalue $\pm 1$.
Since $v_O(s_O)$ is even, the eigenvalue is $1$.
\end{proof}

\begin{lemma}
\label{L:g symmetry}
For each $\chi$, we have $[-1]^* g_\chi = g_{\chi^{-1}}$.
\end{lemma}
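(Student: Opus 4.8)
The plan is to compute $[-1]^* g_\chi$ directly from the definition $g_\chi = \sum_{P \in C} \chi(P)\, s_P$ with $s_P = \tau_{-P}^* s_O$, using the symmetry of $s_O$ established in Lemma~\ref{L:s_O symmetry}. The key identity is the relation between $[-1]^*$ and translation: for any point $P$ on $E$ we have $[-1] \circ \tau_{-P} = \tau_P \circ [-1]$ as morphisms $E \to E$, hence $\tau_{-P}^* \circ [-1]^* = [-1]^* \circ \tau_P^*$, i.e.\ $[-1]^* \tau_{-P}^* = \tau_P^* [-1]^*$.

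First I would apply $[-1]^*$ to each term: $[-1]^* s_P = [-1]^* \tau_{-P}^* s_O = \tau_P^* [-1]^* s_O = \tau_P^* s_O = s_{-P}$, where the third equality uses Lemma~\ref{L:s_O symmetry}. Then
\[
  [-1]^* g_\chi = \sum_{P \in C} \chi(P)\, [-1]^* s_P = \sum_{P \in C} \chi(P)\, s_{-P}.
\]
Reindexing the sum by $Q = -P$ (which is a bijection of $C$ onto itself), and using $\chi(-Q) = \chi(Q)^{-1} = \chi^{-1}(Q)$, this becomes $\sum_{Q \in C} \chi^{-1}(Q)\, s_Q = g_{\chi^{-1}}$, as desired.

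One point that deserves care is the compatibility of $[-1]^*$ and $\tau_Q^*$ with the identification of sections of $\LL$ with rational functions and with the action on $\LL = \OO(C)$; but since $N$ is odd, $C = [-1](C)$ and $\tau_P^* \LL \isom \LL$ for $P \in E[N]$, so all the relevant pullbacks land back in $\Gamma(E,\LL)$ and the computation is consistent once we fix the normalization of $s_O$. The main (and only real) obstacle is just keeping the bookkeeping of pullbacks straight; there is no deep input beyond Lemma~\ref{L:s_O symmetry} and the elementary functoriality of pullback along the group law. An immediate consequence worth recording is that $g_\chi = 0$ if and only if $g_{\chi^{-1}} = 0$, so the set $\{\chi : g_\chi = 0\}$ is stable under inversion, which will matter for the parity considerations in bounding $c_{E,C}$.
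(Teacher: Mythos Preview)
Your proof is correct and is essentially the same as the paper's: both commute $[-1]^*$ past $\tau_{-P}^*$ to get $\tau_P^*[-1]^*$, invoke Lemma~\ref{L:s_O symmetry}, and reindex by $Q=-P$. The paper writes this as a single operator identity applied to $s_O$, while you unfold it termwise, but the content is identical.
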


\begin{proof}
Apply
\[
	[-1]^* \left( \sum_{P \in C} \chi(P) \tau_{-P}^* \right)
	= \left( \sum_{P \in C} \chi(P) \tau_{P}^* \right) [-1]^*
	= \left( \sum_{Q \in C} \chi(-Q) \tau_{-Q}^* \right) [-1]^*
\]
to $s_O$ and use Lemma~\ref{L:s_O symmetry}.
\end{proof}

\begin{lemma}
\label{L:product of s_P}
We have $\prod_{P \in C} s_P \in \mu$.
\end{lemma}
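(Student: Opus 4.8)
The plan is to compute the divisor of $f \colonequals \prod_{P \in C} s_P$ on $E$ and to exploit normalization. Recall that each $s_P = \tau_{-P}^* s_O$ is normalized (by Section~\ref{S:normalized}, since $s_O$ is normalized and $-P$ is a torsion point), and that $(s_O) = N \cdot O - C$ as a divisor on $E$. First I would observe that $f$ is a product of normalized functions, hence normalized, so it suffices to show that $f$ is \emph{constant}: a normalized constant rational function lies in $\mu$. Thus the crux is a divisor computation showing $(f) = 0$.

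To compute $(f)$, note that $(s_P) = (\tau_{-P}^* s_O) = \tau_P^{-1}\bigl((s_O)\bigr)$ is the translate $\tau_P^*$ of the divisor $N \cdot O - C$; concretely $(s_P) = N \cdot P - (C + P)$, where $C + P$ denotes the coset. Summing over $P \in C$ gives
\[
  (f) = \sum_{P \in C} (s_P) = N \sum_{P \in C} P \;-\; \sum_{P \in C} (C + P).
\]
Since $C + P = C$ for every $P \in C$, the second sum is $N$ copies of the divisor $C$, i.e.\ $N \cdot C = N \sum_{Q \in C} Q$. Hence $(f) = N \sum_{P \in C} P - N \sum_{Q \in C} Q = 0$. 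Therefore $f$ is a normalized rational function with trivial divisor, so it is a normalized constant, and a normalized constant is a root of unity. This gives $\prod_{P \in C} s_P \in \mu$.

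The only point requiring a little care --- and the step I expect to be the main (though minor) obstacle --- is bookkeeping the translated divisors correctly, namely that $\tau_P^{-1}$ sends the zero divisor $N \cdot O$ to $N \cdot P$ and sends the polar divisor $C$ (viewed as $\sum_{Q \in C} Q$) to $\sum_{Q \in C}(Q + P) = \sum_{Q' \in C} Q' = C$, using that $C$ is a subgroup so translation by $P \in C$ permutes $C$. Once the translation conventions are pinned down, the cancellation is immediate and no genuine difficulty remains. (Alternatively, one could avoid divisor bookkeeping by noting $f = \bigl(\prod_{P\in C}\tau_{-P}^*\bigr)s_O$ and that $\prod_{P \in C}\tau_{-P}^*$ is, up to the group structure, symmetric under $C$, but the divisor computation is the cleanest route.)
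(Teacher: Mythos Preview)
Your argument is correct and is exactly the paper's approach: the paper's proof is the single sentence ``It is a normalized rational function whose divisor is $0$,'' and you have simply spelled out the divisor computation and the normalization bookkeeping that underlie that sentence.
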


\begin{proof}
It is a normalized rational function whose divisor is $0$.
\end{proof}

\section{An almost canonical basis}
\label{S:canonical basis}

Fix $(E,C)$.
Let $\phi \colon E \to E'$ be an isogeny with kernel $C$.
Let $\hat{\phi} \colon E' \to E$ be the dual isogeny.
The Weil pairing
\[
	e_\phi \colon \ker \phi \times \ker \hat{\phi} \to k^\times
\]
is nondegenerate, so choosing $Q \in \ker \hat{\phi}$ is equivalent to 
choosing a character $\chi \colon C \to k^\times$,
related via $\chi(P) = e_\phi(P,Q)$ for all $P \in C$.
Let $C_\chi = \phi^* Q \in \Div E$.
Let $h_\chi$ be a normalized function with $(h_\chi) = C_\chi - C$.

\begin{lemma}
\label{L:Weil pairing}
For $P \in C$, we have $\tau_P^* h_\chi = \chi(P) \, h_\chi$.
\end{lemma}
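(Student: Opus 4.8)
The plan is to compute the divisor of $\tau_P^* h_\chi$ and compare it with that of $h_\chi$, concluding that the two functions differ by a constant, and then to pin down that constant as $\chi(P)$ using the defining property of the Weil pairing together with the normalization convention of Section~\ref{S:normalized}. First I would observe that, since $C$ is the kernel of $\phi$, for any $P \in C$ we have $\tau_P^*(\phi^* Q) = \phi^*(\tau_{\phi(P)}^* Q) = \phi^* Q$ because $\phi(P) = O'$; likewise $\tau_P^* C = C$ as divisors, since translation by an element of $C$ permutes $C$. Hence $(\tau_P^* h_\chi) = \tau_P^*(C_\chi - C) = C_\chi - C = (h_\chi)$, so $\tau_P^* h_\chi = \lambda(P)\, h_\chi$ for some $\lambda(P) \in k^\times$. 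Moreover $\tau_P^* h_\chi$ is normalized (translation by a torsion point preserves normalization, as noted in Section~\ref{S:normalized}), and $h_\chi$ is normalized, so $\lambda(P) \in \mu$. The assignment $P \mapsto \lambda(P)$ is visibly a homomorphism $C \to \mu$, i.e.\ a character of $C$.

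The heart of the argument is to identify $\lambda$ with $\chi$. The clean way is to recognize $\lambda(P) = e_\phi(P, Q)$ directly from one of the standard constructions of the Weil pairing. Recall one such description: given $Q \in \ker\hat\phi$, pick a function $g$ on $E$ with divisor $\phi^*Q - \phi^*O' = \phi^* Q - C$ (this is exactly our $h_\chi$, up to the normalization), and pick a function $G$ on $E'$ with $\phi_* $ of the divisor $(g)$ equal to $(G \circ \phi)$ appropriately — more precisely, $g^{\deg\phi}$ and $G\circ\phi$ have the same divisor for a suitable $G$ with divisor a multiple of $(Q) - (O')$; then $e_\phi(P,Q) = \tau_P^* g / g$. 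I would invoke this as the definition of $e_\phi$ (or cite it from a standard reference such as Silverman's \emph{Arithmetic of Elliptic Curves}, III.8, suitably transported to the isogeny pairing via $\hat\phi$), so that the equality $\tau_P^* h_\chi / h_\chi = e_\phi(P,Q) = \chi(P)$ is immediate once we know the function used in that construction may be taken to be the \emph{normalized} function with divisor $C_\chi - C$. That last point needs a remark: the Weil pairing value $\tau_P^* g/g$ is independent of the scaling of $g$ (the scalar cancels), so we are free to take $g = h_\chi$.

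The main obstacle, and the only place requiring care, is matching conventions: the Weil pairing $e_\phi$ on $\ker\phi \times \ker\hat\phi$ has several sign/inverse conventions in the literature, and one must check that the convention implicit in "related via $\chi(P) = e_\phi(P,Q)$" in the setup of Section~\ref{S:canonical basis} is the same one for which $\tau_P^* g / g = e_\phi(P,Q)$ rather than its inverse. If it turns out to be the inverse under the chosen convention, the lemma as stated forces us to have defined $e_\phi$ (equivalently $C_\chi$, equivalently $h_\chi$) with the matching orientation; since $Q$ and $\chi$ are in bijection and we are free to fix the bijection, this is harmless, but the proof should state explicitly which convention is in force. Everything else — the divisor computation, the membership in $\mu$, the homomorphism property — is routine given the results already established on normalized functions.
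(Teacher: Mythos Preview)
Your proposal is correct and follows essentially the same approach as the paper: the identity $\tau_P^* h_\chi / h_\chi = e_\phi(P,Q) = \chi(P)$ is precisely the definition of the isogeny Weil pairing, and the paper's proof consists of a single sentence citing \cite{SilvermanAEC2009}*{Exercise~3.15(a)} for exactly this. Your additional divisor computation, normalization check, and discussion of conventions are all fine but unnecessary once one accepts that reference as the definition of $e_\phi$.
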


\begin{proof}
This is the definition of $e_\phi(P,Q)$, which equals $\chi(P)$; 
see \cite{SilvermanAEC2009}*{Exercise~3.15(a)}.
\end{proof}

Thus $0 \ne h_\chi \in \Gamma(E,\LL)^\chi$ for all $\chi$,
but $\Directsum_\chi \Gamma(E,\LL)^\chi$ is $N$-dimensional, 
so $\Gamma(E,\LL)^\chi = kh_\chi$.
In particular, $g_\chi/h_\chi \in k$.
Now
\begin{equation}
\label{E:counting}
	c_{E,C} = \# \{\chi : g_\chi = 0 \} = \# \{\chi: g_\chi/h_\chi = 0\}.
\end{equation}

\begin{lemma}
\label{L:h symmetry}
For each $\chi$, we have $[-1]^* h_\chi \equiv h_{\chi^{-1}} \pmod{\mu}$.
\end{lemma}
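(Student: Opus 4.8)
The plan is to mimic the proof of Lemma~\ref{L:g symmetry}, transporting the symmetry $[-1]^* h_\chi \equiv h_{\chi^{-1}}$ through the defining divisors and using uniqueness of normalized functions up to $\mu$. First I would note that, by the definition of $h_\chi$, its divisor is $(h_\chi) = C_\chi - C = \phi^* Q - C$, where $C = \ker\phi$ and $Q \in \ker\hat\phi$ corresponds to $\chi$ via $\chi(P) = e_\phi(P,Q)$. Pulling back by $[-1]$ on $E$ and using that $[-1]$ commutes with $\phi$ (so $\phi \circ [-1]_E = [-1]_{E'} \circ \phi$, hence $[-1]_E^* \phi^* = \phi^* [-1]_{E'}^*$), I get
\[
	([-1]^* h_\chi) = [-1]_E^*(\phi^* Q) - [-1]_E^* C = \phi^*([-1]_{E'}^* Q) - C = \phi^*(-Q) - C,
\]
since $C$ is a subgroup, hence fixed by $[-1]$, and $[-1]_{E'}^* Q = -Q$ as a divisor of degree one.

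Next I would identify $-Q \in \ker\hat\phi$ with the character $\chi^{-1}$: indeed $e_\phi(P,-Q) = e_\phi(P,Q)^{-1} = \chi(P)^{-1} = \chi^{-1}(P)$ by bilinearity of the Weil pairing. Therefore $C_{\chi^{-1}} = \phi^*(-Q)$, and the computation above shows that $[-1]^* h_\chi$ is a rational function on $E$ with divisor $C_{\chi^{-1}} - C$, exactly the divisor that defines $h_{\chi^{-1}}$. It remains to check that $[-1]^* h_\chi$ is normalized: this follows from the remark at the end of Section~\ref{S:normalized} applied with the torsion automorphism $[-1]$ in place of a translation (concretely, $[N]_* [-1]^* h_\chi = [-1]^* [N]_* h_\chi$ because $[N]$ commutes with $[-1]$, and $[-1]^*$ fixes $\mu \subset k^\times$). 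Since a normalized function with a given principal divisor supported on torsion points is unique up to multiplication by a root of unity, we conclude $[-1]^* h_\chi \equiv h_{\chi^{-1}} \pmod{\mu}$, as desired.

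I expect the only mild subtlety to be the bookkeeping around the sign conventions: making sure that $[-1]_{E'}$ really sends the degree-one divisor $Q$ to $-Q$ (i.e., to the divisor $[(-Q)]$, the class of the point $-Q$ in the group law on $E'$) rather than introducing a stray translation, and confirming that $\ker\hat\phi$ is stable under $[-1]_{E'}$ so that $-Q$ again indexes a legitimate character of $C$. Both are immediate once one is careful that $Q$ is being treated simultaneously as a point of $E'$ and as a degree-one divisor on $E'$. No genuine obstacle arises; the argument is a direct translation of Lemma~\ref{L:g symmetry} combined with the uniqueness statement from Section~\ref{S:normalized} and bilinearity of the Weil pairing.
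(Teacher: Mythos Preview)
Your argument is correct and is exactly the approach the paper takes: the paper's entire proof is ``Compare divisors, and observe that both sides are normalized,'' and you have simply unpacked both clauses in detail. One minor remark: when you justify that $[-1]^* h_\chi$ is normalized via $[N]_*[-1]^* h_\chi = [-1]^*[N]_* h_\chi$, it is worth noting explicitly that $[-1]_* = [-1]^*$ because $[-1]$ is an involution, so the pushforward--pullback interchange is legitimate; otherwise the identity $[N]\circ[-1] = [-1]\circ[N]$ only gives $[N]_*[-1]_* = [-1]_*[N]_*$.
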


\begin{proof}
Compare divisors, and observe that both sides are normalized.
\end{proof}

\begin{lemma}
\label{L:gh symmetry}
For any $\chi$, 
we have $g_\chi/h_\chi \equiv g_{\chi^{-1}}/h_{\chi^{-1}} \pmod{\mu}$.
\end{lemma}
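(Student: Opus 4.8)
The plan is to combine the two symmetry lemmas already established, namely Lemma~\ref{L:g symmetry} and Lemma~\ref{L:h symmetry}, by dividing them. First I would apply $[-1]^*$ to the ratio $g_\chi/h_\chi$, which is a legitimate operation on rational functions since $h_\chi \ne 0$: pullback by an automorphism of $E$ is multiplicative, so $[-1]^*(g_\chi/h_\chi) = ([-1]^* g_\chi)/([-1]^* h_\chi)$. By Lemma~\ref{L:g symmetry} the numerator equals $g_{\chi^{-1}}$ exactly, and by Lemma~\ref{L:h symmetry} the denominator equals $h_{\chi^{-1}}$ up to multiplication by an element of $\mu$. Hence $[-1]^*(g_\chi/h_\chi) = \zeta' \, g_{\chi^{-1}}/h_{\chi^{-1}}$ for some $\zeta' \in \mu$.

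The second ingredient is that $g_\chi/h_\chi$ is an element of $k$ (a constant), as noted right after Lemma~\ref{L:Weil pairing}, because $\Gamma(E,\LL)^\chi$ is one-dimensional and spanned by $h_\chi$, forcing $g_\chi \in k h_\chi$; the same applies with $\chi$ replaced by $\chi^{-1}$. A constant rational function is fixed by every $[\pm 1]^*$, so $[-1]^*(g_\chi/h_\chi) = g_\chi/h_\chi$. Combining this with the displayed identity gives $g_\chi/h_\chi = \zeta' \, g_{\chi^{-1}}/h_{\chi^{-1}}$, which is exactly the congruence $g_\chi/h_\chi \equiv g_{\chi^{-1}}/h_{\chi^{-1}} \pmod{\mu}$ claimed in Lemma~\ref{L:gh symmetry}.

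There is essentially no obstacle here; the only point requiring a moment's care is bookkeeping the $\mu$-ambiguity. Lemma~\ref{L:s_O symmetry} pins down $[-1]^* s_O = s_O$ on the nose (not just up to $\mu$), so $g_\chi$ and $g_{\chi^{-1}}$ are related exactly, with no constant; the sole source of the $\mu$-factor is Lemma~\ref{L:h symmetry}, and it is harmless because we are only asserting a congruence mod $\mu$. One could even track the constant explicitly if desired, but for the intended application—counting the $\chi$ with $g_\chi/h_\chi = 0$ via \eqref{E:counting}, and observing the count is symmetric under $\chi \mapsto \chi^{-1}$—the congruence mod $\mu$ is all that is needed, since it shows $g_\chi/h_\chi$ vanishes if and only if $g_{\chi^{-1}}/h_{\chi^{-1}}$ does.
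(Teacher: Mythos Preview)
Your proof is correct and follows essentially the same approach as the paper: apply $[-1]^*$ to the ratio, use Lemmas~\ref{L:g symmetry} and~\ref{L:h symmetry} to identify the result as $g_{\chi^{-1}}/h_{\chi^{-1}}$ up to $\mu$, and then observe that $g_\chi/h_\chi$ is a constant and hence fixed by $[-1]^*$. Your write-up simply spells out in more detail what the paper compresses into two sentences.
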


\begin{proof}
By Lemmas \ref{L:g symmetry} and~\ref{L:h symmetry},
$[-1]^*(g_\chi/h_\chi) \equiv g_{\chi^{-1}}/h_{\chi^{-1}} \pmod{\mu}$.
On the other hand, $g_\chi/h_\chi$ is constant on $E$, 
so $[-1]^*(g_\chi/h_\chi) = g_\chi/h_\chi$.
\end{proof}

\section{The universal elliptic curve}
\label{S:universal}

Given an elliptic curve $E$ over $k$
and a point $P \in E(k)$ of exact order $N$,
we define $C$ as the subgroup generated by $P$.
For $m \in \Z/N\Z$, let $\chi \colon C \to k^\times$
be the character such that $\chi(P)=\zeta^m$,
and set $g_m \colonequals g_\chi$ and $h_m \colonequals h_\chi$.
We may assume that $h_0=1$.

Suppose that $N > 3$ and $\Char k \nmid N$.
Then the moduli space $Y_1(N)$ parametrizing pairs $(E,P)$
is a fine moduli space 
(it can be viewed as an \'etale quotient of the affine curve $Y(N)$
constructed by Igusa \cite{Igusa1959},
because a pair $(E,P)$ consisting of an elliptic curve 
and a point of exact order $N > 3$ has no nontrivial automorphisms).
Thus there is a universal elliptic curve $\EE \to Y_1(N)$.
The construction of $s_O$ makes sense on $\EE$,
except that normalizing it may require taking an $N^2$th root
of an invertible function on $Y_1(N)$.
Thus $s_O$ is a rational function not on the elliptic surface $\EE \to Y_1(N)$,
but on a pullback $\EE' \to Y_1(N)'$ 
by some finite \'etale cover $Y_1(N)' \to Y_1(N)$.
Then $s_O^n$ for some $n \ge 1$ lies in $k(\EE)^\times$, 
and $s_O$ itself may be identified with
$\frac{1}{n} \tensor s_O^n \in \Q \tensor_{\Z} k(\EE)^\times$.
Its divisor $(s_O)$ is then an element of $\Q \tensor \Div \EE$.
Given $m \in \Z/NZ$,
we may also define $g_m, h_m \in k(\EE')^\times$
and consider them as elements of $\Q \tensor k(\EE)^\times$.
Then $g_m/h_m$ is a \emph{regular} function on $Y_1(N)'$
and we may consider it an as element of $\Q \tensor k(Y_1(N))^\times$.
Its divisor on $Y_1(N)$ lies in $\Div Y_1(N)$, 
not just $\Q \tensor \Div Y_1(N)$,
since $Y_1(N)' \to Y_1(N)$ is finite \'etale.

\section{The universal generalized elliptic curve}
\label{S:universal generalized}

We continue to assume $N > 3$.
Complete $Y_1(N)$ to a smooth projective curve $X_1(N)$ over $k$.
One can recover from \cite{Deligne-Rapoport1973}*{IV.4.14 and VI.2.7}
that $\EE \to Y_1(N)$ 
can be completed to a ``universal generalized elliptic curve'' 
$\pi \colon \overline{\EE} \to X_1(N)$.
The following description of the cusps of $X_1(N)$ 
and the associated Tate curves is well-known;
see \cite{Deligne-Rapoport1973}*{VII.2}
and \cite{Faltings-Jordan1995}*{\S3.1}.

The cusps on $X_1(N)$ are in bijection with 
\[
	\coprod_{d|N} \frac{(\Z/d\Z)^\times \times (\Z/e\Z)^\times}{\{\pm 1\}},
\]
where $e = N/d$ in each term.
The integer $e$ equals the ramification index of $X_1(N) \to X(1)$ at the cusp,
and is called the width of the cusp.
The cusp represented by $(d,a,b)$, where $0 \le a < d$ and $0 \le b < e$
and $\gcd(a,d)=\gcd(b,e)=1$, 
has a uniformizer $q$
and a punctured formal neighborhood $\Spec k((q))$
above which is the Tate curve analytically 
isomorphic to $\left( \G_m/q^{e\Z},\zeta^a q^b \right) \in Y_1(N)(k((q)))$.
This Tate curve specializes above the cusp itself
to an $e$-gon consisting of irreducible components 
$Z_i \isom \PP^1$ indexed by $i \in \Z/e\Z$
such that $0 \in Z_i$ is attached to $\infty \in Z_{i+1}$ for all $i$.
We choose the coordinate $t \colon Z_i \isomto \PP^1$ for each $i$
such that a point $t_i q^i + \sum_{j>i} t_j q^j \in \G_m/q^{e\Z}$ 
with $t_i \in k^\times$ specializes to 
$t_i \in \G_m \subseteq \PP^1 \isom Z_i \subset \pi^{-1}(y)$.
For each cusp $y$, 
define $F_y \colonequals \pi^*y = \sum_i Z_i \in \Div \overline{\EE}$.

\section{Divisors}

Given a rational function $f$ on $\EE$
whose divisor on $\EE$ is known,
the divisor of $f$ on $\overline{\EE}$ is determined 
up to addition of a linear combination of the $F_y$.
We now explain how to compute it, modulo the ambiguity.
Fix a cusp $y$ of $X_1(N)$, and let $q$ be a uniformizer at $y$,
and let $Z_0,\ldots,Z_{e-1}$ be the components of $\pi^{-1}(y)$.
The valuations $n_i \colonequals v_{Z_i}(f)$ can be simultaneously computed, 
modulo addition of a constant independent of $i$,
by the relations $(f/q^{n_i}).Z_i = 0$ for all $i$,
which amount to linear equations in the $n_i$.
Let us make these equations explicit.
In the case where the zeros and poles of $f$ specialize
to smooth points of $\pi^{-1}(y)$, let $r_i$ be the number
of them specializing to a point of $Z_i$,
counted with multiplicity, with poles counted as negative.
In the equation $(f/q^{n_i}).Z_i = 0$,
only $Z_{i+1}$, $Z_{i-1}$, and the horizontal divisors in $(f)$
meet $Z_i$, so the equation says 
\[
	(n_{i+1}-n_i) + (n_{i-1} - n_i) + r_i = 0.
\]
There is one such equation for each $i$.
Solving this system of $e$ equations 
yields all the $n_i$ up to a common additive constant,
since the solutions to the corresponding homogeneous system
are the arithmetic progressions that are periodic modulo $N$,
i.e., constant sequences.
If in addition, $f$ is normalized, then $\sum n_i = 0$;
now the $n_i$ are uniquely determined.

The above procedure can be applied also to any $f \in \Q \tensor k(\EE)^\times$,
and in particular to the functions $s_P$, $g_m$, and $h_m$.

\begin{lemma}
\label{L:s_O}
For $f=s_O$, 
\begin{enumerate}[\upshape (a)]
\item At a cusp of $X_1(N)$ above $\infty \in X_0(N)$,
we have $e=1$, $n_0=0$, 
and $s_O|_{Z_0} = (1-t)^N/(1-t^N)$ in $\Q \tensor k(Z_0)^\times$.
\item At a cusp of $X_1(N)$ above $0 \in X_0(N)$,
we have $e=N$, $n_i = (N^2-1)/12 - i(N-i)/2$ for $0 \le i < N$,
and $\left( q^{(N^2-1)/24} s_O \right)|_{Z_{(N-1)/2}}$ 
has a zero at $\infty$ and not at $0$,
while $\left( q^{(N^2-1)/24} s_O\right)|_{Z_{(N+1)/2}}$ 
has a zero at $0$ and not at $\infty$.
\end{enumerate}
\end{lemma}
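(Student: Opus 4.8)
The plan is to evaluate $s_O$ near each cusp using the explicit Tate curves recalled in Section~\ref{S:universal generalized} together with the linear-system procedure just described. In both parts one (i) reads off where the horizontal divisor $N \cdot O - C$ of $s_O$ specializes on the fiber $\pi^{-1}(y)$, (ii) solves for the $n_i$, and (iii) identifies the relevant restrictions $s_O|_{Z_i}$ as rational functions on $Z_i \isom \PP^1$ by combining the horizontal specializations with the intersection multiplicities contributed by the neighboring components $Z_{i\pm1}$.

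For part~(a), a cusp above $\infty \in X_0(N)$ has $d=N$, hence width $e=1$, and Tate curve $(\G_m/q^{\Z}, \zeta^a)$ with $\gcd(a,N)=1$; thus $C=\mu_N$ and the fiber is a single nodal $Z_0 \isom \PP^1$ with coordinate $t$ and smooth locus $\G_m$. The identity and the points of $C$ specialize to the distinct smooth points $1,\zeta,\dots,\zeta^{N-1}$, so $r_0=\deg(N\cdot O-C)=0$, the linear system is vacuous, and the normalization $\sum_i n_i=0$ forces $n_0=0$. Hence $s_O$ has a well-defined nonzero restriction to $Z_0$, a rational function on $\PP^1$ with a zero of order $N$ at $t=1$ and simple poles at the remaining $N$th roots of unity; therefore $s_O|_{Z_0}=c\,(1-t)^N/(1-t^N)$ for some $c\in k^\times$. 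To pin down $c$, use the theta-function model over $k((q))$: the function $S \colonequals \theta_q(t)^N/\theta_{q^N}(t^N)$, with $\theta_q$ the usual Tate theta function, is $q$-periodic because $N$ is odd, has divisor $N\cdot O-C$, and reduces mod $q$ to $(1-t)^N/(1-t^N)$; moreover $S$ is already normalized, equivalently $[N]_*\!\left((1-t)^N/(1-t^N)\right)=1$, which is a one-line computation with the norm for $t\mapsto t^N$. Thus $s_O$ and $S$ differ by a root of unity, so $c\in\mu$, which is immaterial in $\Q\tensor k(Z_0)^\times$.

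For part~(b), a cusp above $0\in X_0(N)$ has $d=1$, width $e=N$, and Tate curve $(\G_m/q^{N\Z}, q^b)$ with $\gcd(b,N)=1$; then $C=\langle q^b\rangle=\{q^j:0\le j<N\}$ independently of $b$, and the fiber is the $N$-gon $Z_0,\dots,Z_{N-1}$. In the chosen coordinates, $O=q^0$ specializes to $1\in Z_0$ and $q^j$ to $1\in Z_j$, so $r_0=N-1$ and $r_i=-1$ for $1\le i<N$. The system $n_{i+1}-2n_i+n_{i-1}=-r_i$ on $\Z/N\Z$, together with $\sum_i n_i=0$, has solution $n_i=(N^2-1)/12-i(N-i)/2$: indeed $-i(N-i)/2$ has constant second difference $1$ away from $i=0$ and second difference $1-N$ at $i=0$ by periodicity, matching $-r_i$, and the additive constant is fixed by $\sum_{i=0}^{N-1} i(N-i)=N(N^2-1)/6$. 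For $m=(N\pm1)/2$ one gets $n_m=-(N^2-1)/24$, so $q^{(N^2-1)/24}s_O$ has trivial valuation along $Z_m$ and restricts there to a nonzero rational function; writing $a_j \colonequals n_j+(N^2-1)/24$ for its multiplicity along $Z_j$, the divisor of this restriction on $Z_m\isom\PP^1$ is $a_{m-1}\cdot[\infty]+a_{m+1}\cdot[0]-[1]$, where the first two terms come from the nodes shared with $Z_{m\mp1}$ under the convention $0\in Z_i\sim\infty\in Z_{i+1}$ and the last from the horizontal divisor. A short computation gives $a_{(N-1)/2}=a_{(N+1)/2}=0$ and $a_{(N\pm3)/2}=1$, so the restriction to $Z_{(N-1)/2}$ has divisor $[\infty]-[1]$ and that to $Z_{(N+1)/2}$ has divisor $[0]-[1]$, which is exactly the assertion.

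The routine parts are the solution of the discrete Laplace system and the arithmetic with the $a_j$. The one genuinely delicate point is the constant $c$ in part~(a): making precise that the reduction mod $q$ of the normalized function $s_O$ is again normalized, compatibly with the reduction of the isogeny $[N]$ to the $1$-gon, or equivalently that the theta-quotient $S$ is normalized on the Tate curve. Since the resulting ambiguity is only by a root of unity and the statement is phrased in $\Q\tensor k(Z_0)^\times$, it does not affect the lemma, but it is the step requiring the most care; a secondary source of care is the bookkeeping of which node of $Z_m$ sits at $t=0$ versus $t=\infty$.
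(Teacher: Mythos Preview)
Your proposal is correct and follows essentially the same approach as the paper: identify the Tate curve at each cusp, read off the horizontal specializations $r_i$, solve the discrete Laplace system $n_{i+1}-2n_i+n_{i-1}=-r_i$ with $\sum n_i=0$, and compute the restrictions to the relevant $Z_i$ via intersection with neighboring components. The only cosmetic differences are that the paper pins down $\sum n_i=0$ (and hence $n_0=0$ in part~(a)) concretely via the product relation $\prod_{R\in C}\tau_R^* s_O\in\mu$ from Lemma~\ref{L:product of s_P} rather than invoking the general normalization statement, and it handles the constant $c$ in part~(a) purely by the pushforward computation $[N]_*\bigl((1-t)^N/(1-t^N)\bigr)=1$ without introducing the theta-quotient model; your theta-function aside is an unnecessary detour since you end up using the same norm identity anyway.
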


\begin{proof}
\hfill
\begin{enumerate}[\upshape (a)]
\item 
A cusp above $\infty$ has a punctured neighborhood 
above which is the Tate curve $\G_m/q^\Z$ with cyclic subgroup $\mu_N$,
specializing to a $1$-gon.
In fact, the relation $\prod_{R \in C} \tau_R^* s_O = 1$ 
in $\Q \tensor k(\EE)^\times$
from Lemma~\ref{L:product of s_P}
implies $N n_0 = 0$, so $n_0 = 0$.

The order $N$ zero of $s_O$ specializes to $1$,
and the $N$ poles of $s_O$ specialize to the $N$th roots of unity,
so $s_O|_{Z_0}$ is a nonzero scalar times $(1-t)^N/(1-t^N)$.

Since $s_O$ is normalized, $[N]_* s_0 \in \mu$.
On the other hand, the morphism $[N]$ specializes to the
$N$th power map on $Z_0 \isom \PP^1$,
which pushes $(1-t)^N/(1-t^N)$ forward to 
the norm $\prod_{\omega \in \mu_N} (1-\omega t)^N/(1-(\omega t)^N)
= (1-t^N)^N/(1-t^N)^N = 1$.
By the previous two sentences, 
the scalar of the previous paragraph is in $\mu$.
\item 
A cusp above $0$ has a punctured neighborhood above which is the
Tate curve $\G_m/q^{N\Z}$ with cyclic subgroup generated by $q$.
The $N$ zeros specialize to $Z_0$, 
but the $N$ poles specialize to different $Z_i$, one pole per $Z_i$.
Thus $r_0=N-1$ and $r_i=-1$ for $i \ne 0$.
On the other hand, $\prod_{R \in C} \tau_R^* s_O = 1$ implies $\sum n_i = 0$.
Together these imply that $n_i = (N^2-1)/12 - i(N-i)/2$ for $0 \le i < N$.
The most negative of these are $n_{(N-1)/2}$ and $n_{(N+1)/2}$,
which are both $-(N^2-1)/24$.

The divisor of $\left( q^{(N^2-1)/24} s_O \right)|_{Z_{(N-1)/2}}$ 
on $Z_{(N-1)/2} \isom \PP^1$ is 
\[
	(n_{(N+1)/2} - n_{(N-1)/2})(0) + (n_{(N-3)/2} - n_{(N-1)/2})(\infty) - (1) = (\infty) - (1).
\]
Similarly, the divisor of $\left( q^{(N^2-1)/24} s_O \right)|_{Z_{(N+1)/2}}$ 
on $Z_{(N+1)/2}$ is
\[
	(n_{(N+3)/2} - n_{(N+1)/2})(0) + (n_{(N-1)/2} - n_{(N+1)/2})(\infty) - (1) = (0) - (1).\qedhere
\]
\end{enumerate}
\end{proof}

\begin{corollary}
\label{C:g_m}
\hfill
\begin{enumerate}[\upshape (a)]
\item At the cusp above $\infty \in X_0(N)$
given by $(\G_m/q^\Z,\zeta)$,
we have $g_0|_{Z_0} = N$, 
and for $m \ne 0$ 
we have $g_m|_{Z_0} = (-1)^m N \binom{N}{m} t^m / (1-t^N)$, 
in $\Q \tensor k(\Z_0)^\times$.
\item 
At a cusp above $0$, for any $m,i \in \Z/N\Z$, 
we have $v_{Z_i}(g_m) = -(N^2-1)/24$.
\end{enumerate}
\end{corollary}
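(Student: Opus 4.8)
The plan is to expand $g_m$ via the identity $g_m = \sum_{j \in \Z/N\Z} \zeta^{mj}\,\tau_{-jP}^* s_O$ from Sections~\ref{S:characters} and~\ref{S:universal}, and then restrict the resulting sum to the relevant components, using Lemma~\ref{L:s_O}, which already records $s_O$ on each component above the two cusps of $X_0(N)$, together with the way translation by $jP$ acts on these components.

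For part~(a) I would work on the unique component $Z_0$ above the cusp $(\G_m/q^\Z,\zeta)$. Here $P$ specializes to $\zeta \in \mu_N \subset \G_m \subset Z_0$, so in the coordinate $t$ the translation $\tau_{-jP}$ restricts on $Z_0 \isom \PP^1$ to $t \mapsto \zeta^{-j}t$. Since $v_{Z_0}(s_O)=n_0=0$ by Lemma~\ref{L:s_O}(a) and this is preserved by each translate, we get $v_{Z_0}(g_m)\ge 0$ together with
\[
	g_m|_{Z_0} = \frac{1}{1-t^N}\sum_{j=0}^{N-1}\zeta^{mj}\,(1-\zeta^{-j}t)^N .
\]
Expanding $(1-\zeta^{-j}t)^N$ by the binomial theorem and using $\sum_{j=0}^{N-1}\zeta^{j(m-k)} = N$ when $k\equiv m \pmod N$ and $0$ otherwise, only the term $k=m$ survives when $m\ne 0$, while for $m=0$ the terms $k=0$ and $k=N$ survive and combine to $N(1-t^N)$ because $N$ is odd. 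Dividing by $1-t^N$ gives the asserted formulas for $g_m|_{Z_0}$, and shows $v_{Z_0}(g_m)=0$ (at least when $\binom{N}{m}$ is nonzero in $k$; in any case $v_{Z_0}(g_m)\ge 0$).

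For part~(b), fix a cusp above $0\in X_0(N)$, whose Tate curve is $\G_m/q^{N\Z}$ with distinguished point of exact order $N$ equal to $q^b$ for some $b$ prime to $N$. Then translation by $-jP=q^{-jb}$ carries $Z_i$ isomorphically onto $Z_{i-jb}$, and in the coordinates $t$ of Section~\ref{S:universal generalized} this isomorphism is simply $t\mapsto t$; since $\tau_{-jP}^*q=q$, it follows both that $v_{Z_i}(\tau_{-jP}^* s_O)=n_{i-jb}$ and that $(q^{(N^2-1)/24}\tau_{-jP}^* s_O)|_{Z_i}$ equals $(q^{(N^2-1)/24}s_O)|_{Z_{i-jb}}$ read in the coordinate $t$. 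As $j$ runs over $\Z/N\Z$ and $b$ is invertible, $n_{i-jb}$ runs over all the $n_l$, so $v_{Z_i}(g_m)\ge \min_l n_l = -(N^2-1)/24$; moreover by Lemma~\ref{L:s_O}(b) this minimum is attained at exactly the two indices $l=(N-1)/2$ and $l=(N+1)/2$, hence at exactly two values of $j$. Multiplying $g_m$ by $q^{(N^2-1)/24}$ and restricting to $Z_i$, every term with $i-jb\not\equiv (N\pm1)/2 \pmod N$ restricts to $0$, and what remains is a $k^\times$-linear combination of $(q^{(N^2-1)/24}s_O)|_{Z_{(N-1)/2}}$ and $(q^{(N^2-1)/24}s_O)|_{Z_{(N+1)/2}}$; by Lemma~\ref{L:s_O}(b) the first is nonzero at $0\in\PP^1$ and the second vanishes there, so the combination is not identically zero on $Z_i$. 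Hence $v_{Z_i}(g_m)=-(N^2-1)/24$ for all $m$ and $i$.

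The binomial/roots-of-unity collapse in~(a) and the valuation count in~(b) are routine; the point needing care is the bookkeeping at the cusps over $0$: one must verify that translation by the distinguished point of order $N$ cyclically permutes the components $Z_i$ of the $N$-gon and is the identity in the chosen affine coordinates, so that the ``zero at $0$ versus zero at $\infty$'' information of Lemma~\ref{L:s_O}(b) can be transported to an arbitrary $Z_i$ and shown not to cancel after forming the character-weighted sum. The analogous (easier) fact at $\infty$ is just that the translations act by $t\mapsto\zeta^{-j}t$ on the single component $Z_0$.
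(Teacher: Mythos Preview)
Your proof is correct and follows essentially the same approach as the paper's: in~(a) you pull back $s_O|_{Z_0}=(1-t)^N/(1-t^N)$ by the translations $t\mapsto\zeta^{-j}t$, expand binomially, and collapse via orthogonality of characters; in~(b) you use that $C$ permutes the $Z_i$ simply transitively so the valuations $v_{Z_i}(s_{jP})$ run over all the $n_l$ of Lemma~\ref{L:s_O}(b), with the minimum hit exactly twice, and then rule out cancellation using the ``zero at $0$ versus zero at $\infty$'' information. Your write-up is in fact slightly more explicit than the paper's in two respects: you track the general cusp above $0$ with point $q^b$ rather than silently taking $b=1$, and you spell out that $\tau_{-jP}$ is the identity in the $t$-coordinate (and fixes $q$), which is exactly the bookkeeping needed to transport Lemma~\ref{L:s_O}(b) to an arbitrary $Z_i$.
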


\begin{proof}
\hfill
\begin{enumerate}[\upshape (a)]
\item 
Up to a root of unity which may be ignored, 
$s_O|_{Z_0} = (1-t)^N/(1-t^N)$ by Lemma~\ref{L:s_O}(a).
Translation by $P$ restricts to multiplication by $\zeta$ on $Z_0$, so 
\begin{align*}
	s_{jP}|_{Z_0}
	&= \tau_{-jP}^* s_O|_{Z_0} \\
	&= (1-\zeta^{-j} t)^N/(1-(\zeta^{-j} t)^N) \\
	&= \frac{1}{1-t^N} \sum_{i=0}^N \binom{N}{i} (-1)^i \zeta^{-ij} t^i \\
	g_m|_{Z_0}
	&= \sum_{j=0}^{N-1} \zeta^{mj} \frac{1}{1-t^N} \sum_{i=0}^N \binom{N}{i} (-1)^i \zeta^{-ij} t^i \\
	&= \frac{1}{1-t^N} \sum_{i=0}^N (-1)^i \binom{N}{i} t^i \sum_{j=0}^{N-1} \zeta^{(m-i)j} \\
	&= \frac{1}{1-t^N} \sum_{i=0}^N (-1)^i \binom{N}{i} t^i 
	\begin{cases}
          N, & \textup{ if $m-i \equiv 0 \pmod{N}$;} \\
	  0, & \textup{ otherwise.}
        \end{cases}
\end{align*}
If $m=0$, then only the terms with $i=0$ or $i=N$ are nonzero,
and the sum becomes $(1-t^N)N$.
If $m \ne 0$, then only the term with $i=m$ is nonzero,
and the sum becomes $(-1)^m \binom{N}{m} t^m N$.
\item 
The translation action of $C$ acts simply transitively on the
set of components $Z_i$ above the cusp.
Thus the numbers $v_{Z_i}(s_{jP})$ for $j=0,\ldots,N-1$
equal the numbers $v_{Z_{i'}}(s_O)$ for $i'=0,\ldots,N-1$ in some order,
which are described by Lemma~\ref{L:s_O}(b).
Hence in the sum $g_m = \sum_{j=0}^{N-1} \zeta^{mj} s_{jP}$
there are exactly two terms with the most negative valuation along $Z_i$, 
so $v_{Z_i}(\zeta^{mj} s_{jP}) = -(N^2-1)/24$ for $j=j_1$ and $j=j_2$, say.
The last two claims in Lemma~\ref{L:s_O}(b)
imply that one of the functions 
$(q^{(N^2-1)/24} \zeta^{mj} s_{jP})|_{Z_i}$ for $j=j_1$ and $j=j_2$
has a zero at $\infty$ and not at $0$,
while the other has a zero and not at $\infty$,
so their sum is nonzero on $Z_i$.
Thus $v_{Z_i}(g_m) = -(N^2-1)/24$ too.\qedhere
\end{enumerate}
\end{proof}

\begin{proof}[Proof of Theorem~\ref{T:finitely many}]
We may work on the finite cover $Y_1(N)'$ of $Y_0(N)$ 
defined in Section~\ref{S:universal}.
By Corollary~\ref{C:g_m}(b), no $g_m$ is identically zero.
Hence each function $g_m/h_m$ on $Y_1(N)'$ has only finitely many zeros.
Equation~\eqref{E:counting} shows that outside the union of these zeros, 
$c_{E,C}=0$; i.e., the $f_P$ are linearly independent.
\end{proof}

Let $G \colonequals g_1 g_2 \cdots g_{N-1}$ 
and $H \colonequals h_1 h_2 \cdots h_{N-1}$
in $\Q \tensor k(\EE)^\times$.
The divisor of $H$ on $\EE$ is $\EE[N] - N C$.

\begin{lemma}
\label{L:H}
For $f=H$,
\begin{enumerate}[\upshape (a)]
\item At a cusp of $X_1(N)$ above $\infty \in X_0(N)$, 
we have $e=1$ and $n_0=-(N^2-1)/12$.
\item At a cusp of $X_1(N)$ above $0 \in X_0(N)$,
we have $n_i=0$ for all $i$.
\end{enumerate}
\end{lemma}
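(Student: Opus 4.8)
The plan is to compute the $n_i=v_{Z_i}(H)$ by the procedure described above, applied to $f=H$, whose divisor on $\EE$ is $\EE[N]-NC$ (as recorded just above). At each cusp this amounts to reading off, from the Tate curve, where the $N^2$ zeros of $H$ (the points of $\EE[N]$) and the $N$ poles of multiplicity $N$ (the points of $C$, each with multiplicity $N$) specialize on the fibral polygon, extracting the integers $r_i$, solving the recursion $(n_{i+1}-n_i)+(n_{i-1}-n_i)+r_i=0$, and fixing the additive constant by $\sum_i n_i=0$.

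I would dispatch part~(b) first. At a cusp above $0\in X_0(N)$, of width $e=N$, the Tate curve is $\G_m/q^{N\Z}$ with $C=\langle q\rangle$; its $N$-torsion is $\{\zeta^i q^j: i,j\in\Z/N\Z\}$, so for each $j$ exactly $N$ of the zeros of $H$, namely $\zeta^iq^j$, specialize to the $N$ distinct \emph{smooth} points $\zeta^i$ of $Z_j$, while the poles coming from $NC$ give an $N$-fold pole at the single smooth point $1\in Z_j$. Hence $r_j=N-N=0$ for all $j$, the recursion forces the $n_i$ to be constant, and $\sum_i n_i=0$ gives $n_i=0$.

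Part~(a) needs more care because at a cusp above $\infty$ we have $e=1$ and the unique component $Z_0\isom\PP^1$ carries a node: the subgroup $C=\mu_N$ still specializes to $N$ smooth points of $Z_0$, but only those $N$ of the $N^2$ points of $\EE[N]$ do, the other $N(N-1)$ specializing to the node, so the smooth-specialization recipe does not apply. The fix I would use is to pull back along the totally ramified degree $N$ base change $q=s^N$ of the formal disk at the cusp; over it the Tate curve is $\G_m/q^\Z=\G_m/s^{N\Z}$, whose special fiber is the Néron $N$-gon $\widetilde Z_0,\dots,\widetilde Z_{N-1}$ obtained by resolving the resulting $A_{N-1}$ surface singularity at the node, with $\widetilde Z_0$ the proper transform of $Z_0$ (still carrying $\mu_N$ at smooth points). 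On this $N$-gon all $N^2$ torsion points $\zeta^is^j$ lie at smooth points, $N$ on each $\widetilde Z_j$, while the $N$-fold pole of $H$ along $C$ sits, with total multiplicity $N^2$, on $\widetilde Z_0$. Thus $r_0=N-N^2$ and $r_j=N$ for $j\ne0$, with $\sum_jr_j=0$; solving the recursion on $\Z/N\Z$ --- a discrete Poisson problem whose solution is quadratic in $j$ away from $0$ --- together with $\sum_j v_{\widetilde Z_j}(\rho^*H)=0$ gives
\[
	v_{\widetilde Z_j}(\rho^*H)=-\frac{N(N^2-1)}{12}+\frac{N}{2}\,j(N-j)\qquad(0\le j<N),
\]
where $\rho$ is the base-change-and-resolution map. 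Finally, $Z_0$ has multiplicity $1$ in the fiber over the cusp while $\widetilde Z_0$ has multiplicity $1$ in the $N$-gon, so $q=s^N$ has valuation $N$ along $\widetilde Z_0$ and hence $v_{\widetilde Z_0}(\rho^*H)=N\,v_{Z_0}(H)$; taking $j=0$ yields $v_{Z_0}(H)=-(N^2-1)/12$, as claimed.

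The one genuine obstacle is the node at the cusps above $\infty$: one must check that this base change really produces the Néron $N$-gon, correctly match up which component of the $N$-gon is the proper transform of $Z_0$ (equivalently the one on which $\mu_N=C$ specializes) so as to read the value at $j=0$, and take care that the normalization $\sum_j v_{\widetilde Z_j}(\rho^*H)=0$ is invoked on the $N$-gon, where all zeros and poles of $H$ lie at smooth points, rather than naively on the $1$-gon. Everything else --- the specialization bookkeeping and the solution of the discrete Poisson recursion --- is routine and parallels the proof of Lemma~\ref{L:s_O}(b).
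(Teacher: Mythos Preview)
Your proposal is correct and follows essentially the same route as the paper: compute the $r_i$ on an $N$-gon, solve the discrete Laplacian with the constraint $\sum n_i=0$ coming from normalization, and (for part~(a)) divide by the ramification index $N$ to descend to $X_1(N)$. The only cosmetic difference is that the paper obtains the $N$-gon by passing globally to $X(N)$, whereas you do the equivalent local base change $q=s^N$ and resolve the resulting $A_{N-1}$ singularity; the computations of the $r_i$ and the resulting $n_i$ are identical.
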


\begin{proof}
We work on the universal generalized elliptic curve over $X(N)$,
whose degenerate fibers are all $N$-gons,
so that the zeros and poles of $H$ do not specialize to 
the singular points of fibers.
As usual, let $Z_0,\ldots,Z_{N-1}$ be the components above a cusp;
let $n'_i = v_{Z_i}(H)$.
The normalization implies that the product of all translates of $H$
by $N$-torsion points is in $\mu$,
so $\sum n_i=0$.
\begin{enumerate}[\upshape (a)]
\item 
We have $r_0=-N(N-1)$ and $r_i=N$ for $i\ne 0$.
The $r_i$ here are $-N$ times the $r_i$ in the proof of Lemma~\ref{L:s_O}(b),
so the resulting $n'_i$ are also multiplied by $-N$;
that is, $n'_i = -N(N^2-1)/12 + Ni(N-i)/2$ for $0 \le i < N$.
Finally, $X(N) \to X_1(N)$ has ramification index $N$ 
at cusps above $\infty$, so $n_0 = n_0'/N$.
\item 
Each $h_m$ has one zero and one pole specializing to each $Z_i$,
so $r_i=0$ for all $i$.
Thus $n_i'=0$ for all $i$, so $n_i=0$ for all $i$.\qedhere
\end{enumerate}
\end{proof}

\begin{lemma}
\label{L:ratios}
Let $N > 3$ be prime.
\hfill
\begin{enumerate}[\upshape (a)]
\item 
The element $g_0 = g_0/h_0 \in \Q \tensor k(\EE)^\times$
lies in $\Q \tensor k(X_0(N))^\times$,
its valuations at the cusps of $X_0(N)$ are
$v_\infty(g_0) = 0$ and $v_0(g_0)=-(N^2-1)/24$, 
and its divisor on $Y_0(N)$ is effective and of degree $(N^2-1)/24$.
\item 
The $G/H = \prod_{m=1}^{N-1}(g_m/h_m) \in \Q \tensor k(\EE)^\times$
lies in $\Q \tensor k(X_0(N))^\times$,
with $v_\infty(G/H) \ge (N^2-1)/12$ and $v_0(G/H)=-(N-1)(N^2-1)/24$.
The divisor of $G/H$ on $Y_0(N)$ is of degree $\le (N-3)(N^2-1)/24$,
and it is twice an effective divisor on $Y_0(N)$.
\end{enumerate}
\end{lemma}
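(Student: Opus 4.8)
The plan is, separately for the functions $g_0=g_0/h_0$ (using $h_0=1$) and $G/H$, to carry out three steps: show it descends to a rational function on $X_0(N)$; compute its valuations at the two cusps of $X_0(N)$; and deduce the shape of its divisor on $Y_0(N)$. For the descent I would note that $g_0=\sum_{P'\in C}s_{P'}$ and that $G=\prod_{\chi\neq\chi_0}g_\chi$, $H=\prod_{\chi\neq\chi_0}h_\chi$, where the last two products run over \emph{all} nontrivial characters of $C$; none of these expressions mentions the chosen generator $P$, so $g_0$, $G$, $H$ depend only on $(E,C)$ up to a root of unity, and hence $g_0$ and $G/H$ lie in $\Q\tensor k(X_0(N))^\times$. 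Each $g_m/h_m$ is regular on $Y_1(N)'$ by Section~\ref{S:universal}, so $g_0$ and $G/H$ are too, and regularity descends along the finite maps $Y_1(N)'\to Y_1(N)\to Y_0(N)$; thus $(g_0)_{Y_0(N)}$ and $(G/H)_{Y_0(N)}$ are effective.

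For the cusps, recall that for $N$ prime $X_0(N)$ has exactly the cusps $\infty$ (width $1$) and $0$ (width $N$), and the description in Section~\ref{S:universal generalized} shows that $X_1(N)\to X_0(N)$ is unramified over each (the cusps of $X_1(N)$ over $\infty$ and over $0$ have widths $1$ and $N$). Hence for $f\in\Q\tensor k(X_0(N))^\times$ and a cusp $\tilde y$ of $X_1(N)$ over $y\in\{0,\infty\}$ we have $v_y(f)=v_{\tilde y}(f)$, and since $f$ is constant on fibres while $\pi^{*}\tilde y=\sum_i Z_i$ with each $Z_i$ of multiplicity one, also $v_{\tilde y}(f)=v_{Z_i}(f)$. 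Feeding in Corollary~\ref{C:g_m} and Lemma~\ref{L:H} gives $v_\infty(g_0)=0$ (as $g_0|_{Z_0}=N\neq 0$, with $n_0=0$), $v_0(g_0)=-(N^2-1)/24$, $v_0(G/H)=\sum_{m=1}^{N-1}v_{Z_i}(g_m)-v_{Z_i}(H)=-(N-1)(N^2-1)/24$, and $v_\infty(G/H)=\sum_{m=1}^{N-1}v_{Z_0}(g_m)-v_{Z_0}(H)\geq (N^2-1)/12$; here the inequality holds because each $s_{jP}$, hence each $g_m=\sum_j\zeta^{mj}s_{jP}$, has nonnegative $Z_0$-valuation by Lemma~\ref{L:s_O}(a), while $v_{Z_0}(H)=-(N^2-1)/12$. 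Since any element of $\Q\tensor k(X_0(N))^\times$ has degree-zero divisor on the complete curve $X_0(N)$ and $Y_0(N)=X_0(N)\setminus\{0,\infty\}$, subtracting the cusp contributions yields $\deg(g_0)_{Y_0(N)}=(N^2-1)/24$ and $\deg(G/H)_{Y_0(N)}\leq (N-1)(N^2-1)/24-(N^2-1)/12=(N-3)(N^2-1)/24$.

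It remains to see that $(G/H)_{Y_0(N)}$ is twice an effective divisor, and this is the step I expect to take the most care. By Lemma~\ref{L:gh symmetry} the functions $g_m/h_m$ and $g_{-m}/h_{-m}$ differ on $Y_1(N)$ by a root of unity, so have equal divisors there; as $N$ is odd, $m\mapsto -m$ is fixed-point-free on $\{1,\dots,N-1\}$, so choosing a set $S$ of representatives for its orbits we get $(G/H)_{Y_1(N)}=2E_1$ with $E_1:=\sum_{m\in S}(g_m/h_m)_{Y_1(N)}$ effective. I would then argue that $v_x(G/H)$ is even for every $x\in Y_0(N)$: picking $\tilde x$ over $x$ with ramification index $e$, one has $e\,v_x(G/H)=v_{\tilde x}\big((G/H)_{Y_1(N)}\big)=2v_{\tilde x}(E_1)$, which settles the case $e=1$ at once. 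If $e>1$, the stabilizer $\Gamma_{\tilde x}$ of $\tilde x$ in the diamond Galois group $(\Z/N\Z)^\times/\{\pm1\}$ of $Y_1(N)/Y_0(N)$ has order $e$, and every nontrivial element $\langle a\rangle\in\Gamma_{\tilde x}$ has $a\not\equiv\pm1\pmod N$, hence acts on the pairs $\{m,-m\}$ (equivalently, on $(\Z/N\Z)^\times/\{\pm1\}$ by multiplication) without fixed points; so $\Gamma_{\tilde x}$ permutes these pairs freely, in orbits of size $e$. Because $\langle a\rangle$ carries $(g_m/h_m)_{Y_1(N)}$ to $(g_{ma^{-1}}/h_{ma^{-1}})_{Y_1(N)}$ and fixes $\tilde x$, the summands of $E_1$ lying in one $\Gamma_{\tilde x}$-orbit all have the same valuation at $\tilde x$; hence $e\mid v_{\tilde x}(E_1)$ and $v_x(G/H)=2v_{\tilde x}(E_1)/e$ is even. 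The only delicate point is this control of the divisor at the finitely many points where $Y_1(N)\to Y_0(N)$ ramifies — those over the elliptic points of $Y_0(N)$ — and the freeness of the stabilizer action on the $\{\pm1\}$-orbits of generators of $C$ is exactly what makes it go through; everything else is bookkeeping on top of Corollary~\ref{C:g_m} and Lemmas~\ref{L:s_O}, \ref{L:H}, and~\ref{L:gh symmetry}.
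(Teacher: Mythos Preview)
Your proof is correct and follows the same overall structure as the paper's: descent to $X_0(N)$ via Galois invariance, cusp valuations read off from Corollary~\ref{C:g_m} and Lemma~\ref{L:H}, and the degree bound from $\deg=0$ on the complete curve.

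The one place you diverge is exactly the step you flagged. For ``twice an effective divisor on $Y_0(N)$'', the paper is briefer: it observes that on $Y_1(N)'$ one has $g_{-m}/h_{-m}=g_m/h_m$ (Lemma~\ref{L:gh symmetry}), so $G/H=\bigl(\prod_{m=1}^{(N-1)/2} g_m/h_m\bigr)^2$ there. The implicit point is that this square root itself lies in $\Q\tensor k(X_0(N))^\times$: the diamond operator $\langle a\rangle$ permutes the factors $g_m/h_m\mapsto g_{ma^{-1}}/h_{ma^{-1}}$, and modulo~$\mu$ this is again a product over a full set of representatives of $(\Z/N\Z)^\times/\{\pm1\}$. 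Hence $G/H$ is already a square on $Y_0(N)$, and no separate analysis at the ramified (elliptic) points is needed. Your stabilizer argument---using that $\Gamma_{\tilde x}\subset(\Z/N\Z)^\times/\{\pm1\}$ acts freely by multiplication, so the divisors $(g_m/h_m)_{Y_1(N)}$ over a single orbit contribute $e$ equal summands at~$\tilde x$---reaches the same conclusion by a more hands-on route; it is correct, and it makes explicit what the paper's one-line ``can be checked on the \'etale cover $Y_1(N)'$'' leaves to the reader.
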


\begin{proof}
Each $g_m/h_m$ is constant on each elliptic curve fiber,
so $g_m/h_m$ lies in $\Q \tensor k(X_1(N))^\times$.
The Galois group of $X_1(N) \to X_0(N)$ fixes $g_0/h_0$
and permutes the $g_m/h_m$,
so $g_0/h_0$ and $G/H$ are in $\Q \tensor k(X_0(N))^\times$.

\begin{enumerate}[\upshape (a)]
\item 
The valuations $v_\infty(g_0)$ and $v_0(g_0)$ 
are determined by Corollary~\ref{C:g_m}.
On the other hand, (a power of) $g_0=g_0/h_0$ is regular on $Y_0(N)$,
and its divisor on the projective curve $X_0(N)$ has degree $0$.
\item 
The valuation of $G/H$ along the component $Z_0$ above a cusp of $X_1(N)$
above $\infty$
is $\ge \left( \sum_{m=1}^{N-1} 0 \right) - (-(N^2-1)/12) = (N^2-1)/12$,
by Corollary~\ref{C:g_m}(a) and Lemma~\ref{L:H}(a);
thus $v_\infty(G/H) \ge (N^2-1)/12$.
The valuation of $G/H$ along any component $Z_i$ above a cusp above $0$
is $\left( \sum_{m=1}^{N-1} -(N^2-1)/24 \right) - 0 = -(N-1)(N^2-1)/24$
by Corollary~\ref{C:g_m}(b) and Lemma~\ref{L:H}(b);
thus $v_0(G/H) = -(N-1)(N^2-1)/24$.

Since the divisor of $G/H$ on $X_0(N)$ has degree $0$,
its divisor on $Y_0(N)$ has degree at most
$-(N^2-1)/12 + (N-1)(N^2-1)/24 = (N-3)(N^2-1)/24$.

That it is twice an effective divisor 
can be checked on the \'etale cover $Y_1(N)'$ of Section~\ref{S:universal}.
There, each $g_m/h_m$ is regular, and Lemma~\ref{L:gh symmetry}
shows that $g_{-m}/h_{-m} = g_m/h_m$,
so $G/H$ is a square.
\qedhere
\end{enumerate}
\end{proof}

\begin{proof}[Proof of Theorem~\ref{T:D1 and D2}]
Let $D_{Y_1(N)}$ be the pullback of $D$ under $Y_1(N) \to Y_0(N)$.
Let $(g_m/h_m)_{\red} \in \Div Y_1(N)$ be the reduced divisor 
whose support equals the divisor of $g_m/h_m$ on $Y_1(N)$.
Equation~\eqref{E:counting} says that 
$D_{Y_1(N)} = \sum_{m=0}^{N-1} (g_m/h_m)_{\red}$.
The divisors
$D_{Y_1(N),1} \colonequals (g_0/h_0)_{\red}$
and $D_{Y_1(N),2} = \sum_{m=1}^{(N-1)/2} (g_m/h_m)_{\red} 
= \frac{1}{2} \sum_{m=1}^{N-1} (g_m/h_m)_{\red}$
are invariant under the Galois group of $Y_1(N) \to Y_0(N)$,
so they are pullbacks of divisors $D_1$ and $D_2$ on $Y_0(N)$.
We have $D_{Y_1(N)} = D_{Y_1(N),1} + 2 D_{Y_1(N),2}$, so $D = D_1 + 2 D_2$.

The degree of $D_1$ is bounded by the degree of $g_0/h_0$ on $Y_0(N)$,
which is $(N^2-1)/24$ by Lemma~\ref{L:ratios}(a).
Similarly, the degree of $2D_2$ is bounded by the degree of 
$G/H$ on $Y_0(N)$, which is at most $(N-3)(N^2-1)/24$
by Lemma~\ref{L:ratios}(b).
\end{proof}

\section{Examples}
\label{S:examples}

Let $N > 3$ be prime.
On the Tate curve over $k((q))$ analytically isomorphic to $\G_m/q^\Z$ 
we can write down a function with prescribed divisor
in terms of theta functions in $u$ and $q$, 
where $u$ is the coordinate on $\G_m$.
In this way, we express the elements $s_P$, $g_m$, and $h_m$
in terms of $u$ and $q$
and we compute the $q$-expansions of 
the rational functions $g_0/h_0$ and $G/H$ on $X_0(N)$.

Now suppose in addition that the genus of $X_0(N)$ is $0$;
that is, $N \in \{5,7,13\}$.
Let $\eta(q) = q^{1/24} \prod_{n \ge 1} (1-q^n)$.
Then the function $(N^{1/2} \eta(q^N)/\eta(q))^{24/(N-1)}$
is the $q$-expansion of a rational function $t$ on $X_0(N)$ 
with $k(t) = k(X_0(N))$
such that $t$ has a zero at the cusp $\infty$ and a pole at the cusp $0$.
Because of Lemma~\ref{L:ratios}, 
this lets us compute $g_0/h_0$ and $G/H$ as polynomials 
$f_1(t)$ and $t^{(N^2-1)/12} f_2(t)$
whose zeros with $t \ne 0$ give the points $(E,C) \in Y_0(N)$ 
with $c_{E,C}>0$; call these points \defi{exceptional}.
Moreover, in these cases, using an expression for $j$ in terms of $t$,
we may take the $k(t)/k(j)$ norm and take numerators 
to obtain polynomials $F_1(j)$ and $F_2(j)$ (determined up to scalar multiple)
whose zeros are the $j$-invariants of the $E$ 
such that $c_{E,C}>0$ for some $C \subset E$.

For $N \in \{5,7,13\}$, 
we found that the polynomials $f_1(t)$ and $f_2(t)$
are of degrees $(N^2-1)/24$ and $(N-3)(N^2-1)/48$
and have disjoint distinct roots in $\Qbar$ 
(in fact, they are irreducible over $\Q$);
this verifies Conjecture~\ref{C:char 0} for these values of $N$.
In fact, $F_1(j)$ and $F_2(j)$ had the same properties.

\begin{example}
Let $N=5$.
Then
\begin{align*}
	f_1(t) &= t+5 \\
	f_2(t) &= t+10 \\
	F_1(j) &= j-1600 \\
	F_2(j) &= 2j+25.
\end{align*}
Each of $f_1$ and $f_2$ has a unique zero, 
and these zeros are distinct,
and they avoid the cusps (where $t=0$ and $t=\infty$),
except in characteristic~$2$ (we always exclude characteristic~$5$).
Thus in characteristics $\ne 2,5$,
we have $c_{E,C}=0$ 
except for one $(E,C)$ with $c_{E,C}=1$ and one $(E,C)$ with $c_{E,C}=2$,
so the conclusion of Conjecture~\ref{C:char 0} for $N=5$ 
holds in characteristics $\ne 2,5$.
In characteristic~$2$,
we have $c_{E,C}=0$ 
except for one $(E,C)$ with $c_{E,C}=1$,
so the conclusion of Conjecture~\ref{C:char 0} fails.

Moreover, in characteristics $\ne 2,5$, 
the two exceptional $(E,C)$ have $j$-invariants $1600$ and $-25/2$,
which are distinct except in characteristics $3$ and $43$.
In characteristics $3$ and $43$, we find that $c_{E,C}=0$ always 
except that the $E$ with $j(E)=1600=-25/2$
has two exceptional subgroups $C_1$ and $C_2$,
with $c_{E,C_1}=1$ and $c_{E,C_2}=2$.
\end{example}

\begin{example}
Let $N=7$.
Then
\begin{align*}
	f_1(t) &= t^2 + 7 t + 7 \\
	f_2(t) &= t^4 + 21 t^3 + 168 t^2 + 588 t + 735 \\
	F_1(j) &= j^2 - 1104 j - 288000 \\
	F_2(j) &= 15 j^4 - 28857 j^3 + 20163177 j^2 - 5403404499 j - 141176604743
\intertext{and the constant terms, discriminants, and resultants factor as follows:}
	f_1(0) &= 7 \\
        f_2(0) &= 3 \cdot 5 \cdot 7^2 \\
	\Disc(f_1) &= 3 \cdot 7 \\
	\Disc(f_2) &= - 3^3 \cdot 7^6 \\
	\Res(f_1,f_2) &= 7^4 \\
	\Disc(F_1) &= 2^8 \cdot 3^3 \cdot 7^3 \\
	\Disc(F_2) &= - 3 \cdot 7^{18} \cdot 43^2 \cdot 139^2 \cdot 421^2 \cdot 591751^2 \\
	\Res(F_1,F_2) &= 5 \cdot 7^{12} \cdot 47 \cdot 3491 \cdot 5939 \cdot 244603.
\end{align*}
The values of $f_1(0)$, $f_2(0)$, $\Disc(f_1)$, $\Disc(f_2)$
show that in all characteristics $\ne 3,5,7$,
we have $c_{E,C}=0$ 
except for two $(E,C)$ with $c_{E,C}=1$ and four with $c_{E,C}=2$,
so the conclusion of Conjecture~\ref{C:char 0} for $N=7$ 
holds in characteristics $\ne 3,5,7$.
In characteristic~$3$, we have $c_{E,C}=0$
except that $c_{E,C}=1$ for one $(E,C)$ 
(corresponding to the double root $t=1$ of $f_1$, where $j(E)=0$).
In characteristic~$5$, we have $c_{E,C}=0$
except for two $(E,C)$ with $c_{E,C}=1$
and only \emph{three} $(E,C)$ with $c_{E,C}=2$.

Moreover, excluding characteristic~$7$ as always, 
the exceptional $(E,C)$ have distinct values of $j(E)$ 
except in characteristics $2$, $43$, $47$, $139$, $421$, $3491$, 
$5939$, $244603$, and $591751$,
for which there are exactly two exceptional $(E,C)$ sharing the same $j(E)$.
In characteristic~$2$, these two have $c_{E,C}=1$ 
(since $2$ divides $\Disc(F_1)$ but not $\Disc(f_1)$)
In characteristics $43$, $139$, $421$, and $591751$,
these two have $c_{E,C}=2$
(since these primes divide $\Disc(F_2)$ but not $\Disc(f_2)$).
In characteristics $47$, $5939$, and $244603$,
these two have $c$-values $1$ and $2$, respectively
(since these primes divide $\Res(F_1,F_2)$ but not $\Res(f_1,f_2)$).
\end{example}

\begin{example}
Let $N=13$.
Then $\deg f_1 = \deg F_1 = 7$ and $\deg f_2 = \deg F_2 = 35$,
and each of the four polynomials has distinct zeros in $\Qbar$.
The analysis is similar to that for $N=5$ and $N=7$, 
except that we were unable to factor $\Disc(F_2)$ completely.
\end{example}

\section*{Acknowledgment} 

We thank Hyuk Jun Kweon for a comment on a draft of the article.

\begin{bibdiv}
\begin{biblist}

\bib{Deligne-Rapoport1973}{article}{
   author={Deligne, P.},
   author={Rapoport, M.},
   title={Les sch\'{e}mas de modules de courbes elliptiques},
   language={French},
   conference={
      title={Modular functions of one variable, II},
      address={Proc. Internat. Summer School, Univ. Antwerp, Antwerp},
      date={1972},
   },
   book={
      publisher={Springer, Berlin},
   },
   date={1973},
   pages={143--316. Lecture Notes in Math., Vol. 349},
   review={\MR{0337993}},
}

\bib{Faltings-Jordan1995}{article}{
   author={Faltings, Gerd},
   author={Jordan, Bruce W.},
   title={Crystalline cohomology and $\textup{GL}(2,\mathbb{Q})$},
   journal={Israel J. Math.},
   volume={90},
   date={1995},
   number={1-3},
   pages={1--66},
   issn={0021-2172},
   review={\MR{1336315}},
   doi={10.1007/BF02783205},
}

\bib{Igusa1959}{article}{
   author={Igusa, Jun-ichi},
   title={Fibre systems of Jacobian varieties. III. Fibre systems of
   elliptic curves},
   journal={Amer. J. Math.},
   volume={81},
   date={1959},
   pages={453--476},
   issn={0002-9327},
   review={\MR{104669}},
   doi={10.2307/2372751},
}

\bib{SilvermanAEC2009}{book}{
   author={Silverman, Joseph H.},
   title={The arithmetic of elliptic curves},
   series={Graduate Texts in Mathematics},
   volume={106},
   edition={2},
   publisher={Springer, Dordrecht},
   date={2009},
   pages={xx+513},
   isbn={978-0-387-09493-9},
   review={\MR{2514094 (2010i:11005)}},
   doi={10.1007/978-0-387-09494-6},
}

\end{biblist}
\end{bibdiv}

\end{document}